\newcommand{\odv}[2]{\frac{\mathrm{d}#1}{\mathrm{d}#2}}
\def \tC{\widetilde{C}}
\def \pA{\mathcal{A}}
\def \pE{\mathcal{E}}
\def \R{\mathbb{R}}
\def \P{\mathbb{P}}
\def \E{\mathbb{E}}
\newtheorem{theorem}{Theorem}[section]
\newtheorem{lemma}[theorem]{Lemma}
\newtheorem{proposal}[theorem]{Proposition}
\newtheorem{fact}[theorem]{Fact}
\newtheorem{definition}[theorem]{Definition}
\newtheorem{corollary}[theorem]{Corollary}
\newtheorem{remark}{Remark}
\begin{document}

\title{Ground-state representation for fractional Laplacian on half-line}
\author{Tomasz Jakubowski}
\address{Wroc\l{}aw University of Science and Technology}
\email{tomasz.jakubowski@pwr.edu.pl}
\thanks{The paper was partially supported  by the NCN grant 2015/18/E/ST1/00239}
\author{Pawe\l{} Maciocha}
\address{Wroc\l{}aw University of Science and Technology}
\email{pawel.maciocha@pwr.edu.pl}

\subjclass[2010]{Primary 60J35,  60G52; Secondary 60J45}

\begin{abstract}
We give ground-state representation for the fractional Laplacian with Dirichlet condition on the half-line.

\end{abstract}

\maketitle

	\section{Introduction}
	In the paper \cite{MR3460023}, K. Bogdan et al. proposed a general method of constructing supermedian functions for semigroups. 
This approach was applied in \cite{MR3933622} and \cite{MR4140086} to study singular Schr\"odinger perturbation of fractional Laplacian. 
In this paper we apply this methodology to the Dirichlet kernel of the half-line $(0,\infty)$ for the fractional Laplacian and we obtain a wide spectrum of the ground state representations of the corresponding quadratic form. In doing so we resolve mayor technical problems related to compensation of kernels and divergent integrals.

	We  write '':='' to indicate definitions, e.g., $a \wedge b := \min\{a,b\}$ and $a \vee b := \max\{a,b\}.$ We write $f(x) \approx g(x)$ if $f, g \geq 0$ and $c^{-1} g(x) \leq f(x) \leq c g(x) $ for some positive number $c>0$ and all the arguments $x$. 
	
	\subsection{Fractional Laplacian and $\alpha$-stable L\'evy process}
	Let $\alpha \in (0,2)$, 
	\begin{align*}
		\pA_{\alpha} = \frac{\alpha \Gamma(\alpha)\sin\big(\pi\frac{\alpha}{2}\big)}{\pi} \qquad
		\mbox{and} \qquad
		\nu(y) = \pA_\alpha |y|^{-1-\alpha}.
	\end{align*}
	For (smooth and compactly supported) $\phi \in C_c^\infty(\R)$, the fractional Laplacian is
	\begin{align*}
		\Delta^{\alpha/2} \phi(x) = \lim_{\varepsilon\downarrow 0} \int_{B(0,\varepsilon)^c}(\phi(x+y)-\phi(x)) \nu(y) dy, \quad x \in \R.
	\end{align*}
	Let $p_t$ be the smooth real-valued function on $\R$ with Fourier transform
	\begin{align} \label{eq:ftp_t} 
		\int_{\R} p_t(x) e^{ix\xi} dx = e^{-t|\xi|^\alpha}, \qquad t>0,\, \xi \in \R.
	\end{align}
	According to the L\'evy-Khinchine formula, $p_t$ is a density of the probabilistic convolution semigroup with L\'evy measure $\nu(y)dy$, see e.g. \cite{MR2569321}. Let
	\begin{align*}
		p(t,x,y) = p_t(y-x).
	\end{align*}
	We consider the time-homogeneous transition probability $(t,x,A) \mapsto \int_Ap(t,x,y) dy$, $t>0$, $x\in \R$, $A \subset \R$. By Kolmogorov’s and Dynkin-Kinney’s theorems there is a stochastic process $(X_t, \P^x)$ with c\'adl\'ag paths and initial distribution $\P^x(X(0)=x)=1$.
	We denote by $\P^x$ and $\E^x$ the distribution and expectation for the process starting at $x$. 
	We call $X_t$ the isotropic $\alpha$-stable process with index of stability $\alpha \in (0,2)$. In fact, $X_t$ is a L\'evy process with zero Gaussian part and drift, and with L\'evy measure $\nu(y)dy$. 
	
	\indent From (\ref{eq:ftp_t}), we have the scaling property
	\begin{equation*}
		p_t(x) = t^{-1/\alpha} p_1(t^{-1/\alpha} x).
	\end{equation*}
	It is well known that (see e.g. \cite{MR2569321})
	\begin{equation*}
		p_t(x) \approx t^{-1/\alpha} \wedge \frac{t}{|x|^{1+\alpha}},\qquad t>0,~x \in \mathbb{R}.
	\end{equation*}
	Additionally, the function $p(t,x,y)$ satisfies the Chapman-Kolmogorov equation 
	\begin{equation*}
		p(t + s,x,y) = \int_{ \mathbb{R} } p(t,x,z)p(s,z,y)dz, \qquad s,t>0,~x,y \in \mathbb{R}.
	\end{equation*}
	\subsection{Killed process}
	
Throughout the paper we let $D = (0, \infty) \subset \mathbb{R}$. We define the time of the first exit of the process $X_t$ from $D$ by 
	\begin{equation*}
		\tau_D = \inf \{t\geq0: X_t \in D^c \}.
	\end{equation*}
The	random variable $\tau_D$ is almost surely finite Markov moment (see \eqref{eq:lim_prob_tau_D}). 
	By $P_t^D$ we denote the semigroup generated by the process $X_t$ killed on exiting $D$. The semigroup is determined by transition densities $p_D(t,x,y)$ given by the Hunt formula (see e.g. \cite{MR1329992})
	\begin{equation} \label{eq:Hunt}
		p_D(t,x,y) = p(t,x,y) - \mathbb{E}^x[ p(t-\tau_D, X_{  \tau_D}, y) \mathbf{1}_{ \{ \tau_D < t \} } ],\qquad t>0,~x,y\in D.
	\end{equation}
	It well known that the density $p_D(t,x,y)$ is symmetric, i.e.
	$p_D(t,x,y) = p_D(t,y,x)$, continuous in $(t,x,y)$ for $t>0$, $x,y 
	\in D$, satisfies the Chapman-Kolmogorov equation 
	\begin{equation*}
		p_D(t + s,x,y) = \int_D p_D(t,x,z)p_D(s,z,y) dz,
	\end{equation*}
	and for any nonnegative Borel functions, we have
	\begin{align}\label{eq:pDdef}
		P^D_t f(x) := \int_D p_D(t,x,y) f(y) dy = \E^x\left(\mathbbm{1}_{\{t<\tau_D\}} f(X_t) \right), \qquad t>0, \, x \in D.
	\end{align}
	We note that $p_D$ admits the same scaling as $p$,
	\begin{equation} \label{eq:sca}
		p_D(t,x,y) = t^{-1/\alpha} p_D(1,t^{-1/\alpha} y,t^{-1/\alpha} x).
	\end{equation}
	It is known that (see e.g. \cite{MR2722789})
	\begin{equation} \label{eq:oppt}
		p_D(t,x,y) \approx p(t,x,y)\bigg( 1 \wedge \frac{ x^{\alpha/2} }{ t^{1/2} } \bigg) \bigg( 1 \wedge \frac{ y^{\alpha/2} }{ t^{1/2} } \bigg),\qquad t>0,~x,y\in D,
	\end{equation} 
	while for $x \not\in D$ or $y\not\in D$, we have $p_D(t,x,y)=0$, $t>0$. By \eqref{eq:pDdef} and integrating the Hunt formula (\ref{eq:Hunt}), we get
	\begin{equation} \label{eq:prob_tau_D}
		\mathbb{P}^{x} (\tau_{D} > t)= \int_D p_D(t,x,y)dy.
	\end{equation}
	
	\subsection{Main results}
	Recall that $D=(0,\infty)$ and $\alpha \in (0,2)$.	Consider  $\beta \in (0,1),~ \gamma \in (\beta + \alpha/2, 1 + \alpha/2)$	and define
	\begin{equation}\label{eq:constC}
		\mathcal{C} = \int_{0}^{\infty} \int_{D} p_D(t,1,y) t^{ (-\alpha/2 - \beta + \gamma)/\alpha } y^{-\gamma} dydt
	\end{equation}
and
\begin{equation*}
	f(t) = 
	\begin{cases}
	\mathcal{C}^{-1} t^{ (-\alpha/2 - \beta + \gamma)/\alpha },\qquad &\text{for}~t>0,\\
			0,\qquad &\text{for}~t\leq0.
	\end{cases}
\end{equation*}
	According to \cite{MR3460023} we define
	\begin{align*}
		h_{\beta}(x) &= \int_{0}^{\infty} \int_{D} p_D(t,x,y)f(t)y^{-\gamma} dydt,\qquad x \in D, \\
		q_{\beta}(x) &= \frac{1}{h_{\beta}(x)} \int_{0}^{\infty} \int_{D} p_D(t,x,y)f'(t)y^{-\gamma} dydt,\qquad x \in D.
	\end{align*}
By scaling property \eqref{eq:sca}, $h_\beta = x^{\alpha/2-\beta}$ (see Lemma \ref{l:hbeta}). It turns out that $q_\beta$  does not depend on $\gamma$ as well. In the previous papers \cite{MR3933622}, \cite{MR4140086} this construction was used to the convolution semigroups with the Dirac measure $\delta_0$ in the place of $\mu(dy) = y^{-\gamma} dy$. In our case such a choice is impossible because $p_D(t,x,y)=0$ for $y \le0$. Our approach shows how the construction introduced in \cite{MR3460023} may be used for more general semigroup than convolution ones, provided one can find the proper measure $\mu$.   Our main results are stated in the following theorems. 
	\begin{theorem}\label{tw:qbeta}
		Let $\beta \in (0,1)$, then
		\begin{align*}
			q_\beta(x) = \kappa_\beta x^{-\alpha}, \qquad x\in D,
		\end{align*} 
		where
		\begin{align}\label{eq:kappabeta}
			\kappa_\beta = \frac{\Gamma(\beta+\alpha/2)\Gamma(1-\beta+\alpha/2)}{\Gamma(\beta)\Gamma(1-\beta)}.
		\end{align}
	\end{theorem}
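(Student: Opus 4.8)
The plan is to compute $q_\beta$ directly from its integral definition, exploiting the scaling relation \eqref{eq:sca} and the identity $h_\beta(x) = x^{\alpha/2-\beta}$ from Lemma~\ref{l:hbeta} to reduce everything to the base point $x=1$, and then to evaluate the resulting one-parameter integral via the Mellin transform / Beta-integral. Since $f'(t) = \mathcal{C}^{-1}\big((-\alpha/2-\beta+\gamma)/\alpha\big)\, t^{(-\alpha/2-\beta+\gamma)/\alpha - 1}$ for $t>0$, we have
\begin{align*}
  h_\beta(x)\, q_\beta(x) = \frac{-\alpha/2-\beta+\gamma}{\alpha}\,\mathcal{C}^{-1} \int_0^\infty\!\!\int_D p_D(t,x,y)\, t^{(-\alpha/2-\beta+\gamma)/\alpha - 1} y^{-\gamma}\, dy\, dt.
\end{align*}
Applying the scaling \eqref{eq:sca} with the substitution $t \mapsto x^\alpha t$, $y \mapsto x y$ turns the right-hand side into $x^{-\alpha}$ times the same integral evaluated at $x=1$; comparing with the analogous computation for $h_\beta(1)=1$ (which the same substitution gives as $\mathcal{C}^{-1}\int_0^\infty\int_D p_D(t,1,y) t^{(-\alpha/2-\beta+\gamma)/\alpha} y^{-\gamma}\,dy\,dt = \mathcal{C}^{-1}\mathcal{C} = 1$) shows $q_\beta(x) = \kappa_\beta x^{-\alpha}$ with
\begin{align*}
  \kappa_\beta = \frac{-\alpha/2-\beta+\gamma}{\alpha}\cdot \frac{\displaystyle\int_0^\infty\!\!\int_D p_D(t,1,y)\, t^{(-\alpha/2-\beta+\gamma)/\alpha-1} y^{-\gamma}\,dy\,dt}{\displaystyle\int_0^\infty\!\!\int_D p_D(t,1,y)\, t^{(-\alpha/2-\beta+\gamma)/\alpha} y^{-\gamma}\,dy\,dt}.
\end{align*}
So the theorem reduces to evaluating the function $\gamma \mapsto G(s) := \int_0^\infty\int_D p_D(t,1,y) t^{s-1} y^{-\gamma}\,dy\,dt$, where $s = (-\alpha/2-\beta+\gamma)/\alpha$, and showing $\frac{-\alpha/2-\beta+\gamma}{\alpha}\cdot \frac{G(s)}{G(s+1)} = \kappa_\beta$ independently of $\gamma$.

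To evaluate $G$ I would first do the $y$-integral. Using \eqref{eq:sca} again (or directly scaling in $y$ inside $p_D(t,1,y)$), set $\phi(t) := \int_D p_D(t,1,y) y^{-\gamma}\,dy$; then the $t$-integral is the Mellin transform $\int_0^\infty t^{s-1}\phi(t)\,dt$. The key is that $\phi$ itself should be a pure power of $t$: indeed, from \eqref{eq:sca} one checks $\phi(t) = t^{(-\gamma+\alpha/2)/\alpha}\,\phi(1)\cdot(\text{const})$ — more carefully, substituting $y = t^{1/\alpha} u$ and using the scaling of $p_D$ shows $\int_D p_D(t,1,y) y^{-\gamma}\,dy = t^{-(1+\gamma)/\alpha + 1/\alpha}\cdot\Psi(t^{-1/\alpha})$ for a suitable one-variable function $\Psi$, and this does \emph{not} collapse to a single power unless one is careful — so in fact the honest route is to evaluate the double integral $\int_0^\infty\int_D p_D(t,1,y) t^{s-1} y^{-\gamma}\,dy\,dt$ as a whole by first integrating in $t$. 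Integrating $p_D$ in $t$ against $t^{s-1}$ produces (up to the Gamma factor from $\int_0^\infty t^{s-1}e^{-\lambda t}dt$-type identities, applied via the subordination/Green-function representation) the Green operator of $D$ at a fractional power, i.e. essentially $\int_0^\infty p_D(t,1,y) t^{s-1}\,dt = c_s\, G_D^{(\text{something})}(1,y)$, which for the half-line is an explicit power-type kernel. The cleanest version: recall the known formula $\int_0^\infty p_D(t,x,y)\,dt = $ (Green function of $D$), and more generally Riesz-potential-type kernels for $D=(0,\infty)$ are explicit. I would look up or derive $\int_0^\infty t^{s-1} p_D(t,1,y)\,dy\,dt$ in closed form and recognize the remaining $y$-integral as a Beta integral producing $\Gamma(\beta+\alpha/2)\Gamma(1-\beta+\alpha/2)/(\Gamma(\beta)\Gamma(1-\beta))$ after the ratio $G(s)/G(s+1)$ is formed and the $\gamma$-dependence cancels.

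The main obstacle is precisely the explicit evaluation of $\int_0^\infty\int_D p_D(t,1,y)\, t^{s-1} y^{-\gamma}\,dy\,dt$ in closed form, together with the bookkeeping of the Gamma functions so that the $\gamma$-dependent factors cancel in the ratio and leave exactly $\kappa_\beta$. There are two natural ways to organize this: (i) integrate in $t$ first, using that $\int_0^\infty t^{s-1}p_D(t,1,y)\,dt$ is (a constant times) an explicit Riesz-type kernel on the half-line — for the fractional Laplacian on $(0,\infty)$ these kernels are known hypergeometric/Beta-type expressions (cf. the Green function computations in \cite{MR1329992}); or (ii) integrate in $y$ first, using \eqref{eq:prob_tau_D} and related identities to identify $\int_D p_D(t,1,y) y^{-\gamma}\,dy$ as a known special function of $t^{-1/\alpha}$, then Mellin-transform in $t$. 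I expect (i) to be cleaner because the double integral is manifestly a matter of evaluating a single Riesz potential of the power function $y^{-\gamma}$ at the point $1$ on the half-line, and the identity $\int_D (\text{Riesz kernel})(1,y) y^{-\gamma}\,dy = $ (ratio of Gammas) is a classical half-line Beta-type computation. I would verify the convergence of all integrals using \eqref{eq:oppt} and the hypotheses $\beta\in(0,1)$, $\gamma\in(\beta+\alpha/2,1+\alpha/2)$ (which ensure integrability both near $y=0$, near $y=\infty$, and near $t=0$ and $t=\infty$), then carry out the Beta-integral and Gamma-function algebra to arrive at \eqref{eq:kappabeta}. A useful sanity check at the end is the symmetry $\kappa_\beta = \kappa_{1-\beta}$, which the final formula manifestly satisfies.
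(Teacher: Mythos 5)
Your opening step is correct and coincides with the paper's: the scaling \eqref{eq:sca} together with Lemma \ref{l:hbeta} gives $q_\beta(x)=x^{-\alpha}q_\beta(1)$, and your bookkeeping $q_\beta(1)=s\,G(s)/G(s+1)$ with $s=(\gamma-\beta-\alpha/2)/\alpha$ is right. The gap is everything after that: you leave the evaluation of $G(s)=\int_0^\infty\int_D p_D(t,1,y)\,t^{s-1}y^{-\gamma}\,dy\,dt$ as ``the main obstacle,'' and the route you sketch for it does not go through. For the killed semigroup on the half-line the kernels $\int_0^\infty t^{s-1}p_D(t,1,y)\,dt$ are \emph{not} classical Beta-type expressions for general $s$ (even the case $s=1$, the Green function, is a nontrivial formula, and the paper remarks that already the resulting double integral ``seems difficult to calculate''), while the $y$-marginal $\int_D p_D(t,1,y)\,dy=\P^0(S_t<1)$, $S_t=\sup_{u\le t}X_u$, is governed by the Mellin transform of the supremum process, which is expressed through double Gamma functions. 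The paper flags exactly this dead end in Remark \ref{rem:1}. So the identification of the constant --- the actual content of the theorem --- is missing from your argument.

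What the paper does instead, and what your proposal misses, is a semigroup identity that removes the $\gamma$-dependence \emph{before} any explicit computation. Writing $f(t)=\int_0^t f'(s)\,ds$ and using Fubini and Chapman--Kolmogorov,
\begin{equation*}
1=h_\beta(1)=\int_0^\infty\!\!\int_D p_D(t,1,w)\,h_\beta(w)q_\beta(w)\,dw\,dt
=q_\beta(1)\int_0^\infty\!\!\int_D p_D(t,1,w)\,w^{-\beta-\alpha/2}\,dw\,dt,
\end{equation*}
so the ratio $G(s)/G(s+1)$ collapses to a single integral with the specific exponent $\beta+\alpha/2$, independent of $\gamma$. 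Evaluating that integral is precisely Theorem \ref{thm:1}, whose proof is the real work of the section: it proceeds through the Green function, compensated potential kernels $K_\alpha$, and the Ikeda--Watanabe formula (Proposition \ref{prop:5C}), with separate treatments of $\alpha<1$, $\alpha>1$, $\alpha=1$ and the various ranges of $\beta$. None of this is ``a classical half-line Beta-type computation,'' and without it (or without at least invoking Theorem \ref{thm:1} as a black box, which you do not do) your argument does not produce the value \eqref{eq:kappabeta}.
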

	The main difficulty here is to obtain the exact value of the constant $\kappa_\beta$. In contrary to the case of the free process considered in \cite{MR3460023}, generally we cannot calculate the constant $\mathcal{C}$ in \eqref{eq:constC} for arbitrary $\gamma$. One may try here the approach involving the Mellin transform of supremum process and calculate $\mathcal{C}$ for $\gamma=0$ (see Remark \ref{rem:1}). However, it leads to very complicated formulas, so we choose  another method to prove Theorem \ref{tw:qbeta} by finding the value $\mathcal{C}$ for $\gamma = \beta+\alpha/2$. This result is stated in the following theorem.
	\begin{theorem} \label{thm:1}
		Let $\beta \in (0,1)$. Then, 
		\begin{equation} \label{eq:1}
			\int_0^\infty \int_D p_D(t,x,y) y^{-\beta-\alpha/2} dy dt = \kappa^{-1}_{ \beta } x^{\alpha/2 -\beta},\qquad x \in D,
		\end{equation}
		where $\kappa_\beta$ is given by \eqref{eq:kappabeta}.
	\end{theorem}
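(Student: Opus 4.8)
We first rewrite the left‑hand side of \eqref{eq:1}. Put $g(y)=y^{-\beta-\alpha/2}$; by \eqref{eq:pDdef} and Tonelli's theorem,
\begin{equation*}
	\int_0^\infty\!\int_D p_D(t,x,y)\,y^{-\beta-\alpha/2}\,dy\,dt=\int_0^\infty P_t^D g(x)\,dt=\E^x\!\int_0^{\tau_D}X_t^{-\beta-\alpha/2}\,dt,\qquad x\in D,
\end{equation*}
so \eqref{eq:1} is the formula $\E^x\!\int_0^{\tau_D}X_t^{-\beta-\alpha/2}\,dt=\kappa_\beta^{-1}x^{\alpha/2-\beta}$ for the Green potential of $g$ on $D$; its right‑hand side is, up to the factor $\kappa_\beta$, the function $h_\beta(x)=x^{\alpha/2-\beta}$ of Lemma~\ref{l:hbeta} (equivalently, \eqref{eq:1} says $\mathcal C=\kappa_\beta^{-1}$ for $\gamma=\beta+\alpha/2$). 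The plan is to show that $u(x):=x_+^{\alpha/2-\beta}$ (extended by $0$ to $D^c$) satisfies
\begin{equation*}
	\Delta^{\alpha/2}u(x)=-\kappa_\beta\,x^{-\beta-\alpha/2}\ \text{ for }x\in D,\qquad u=0\ \text{ on }D^c,
\end{equation*}
and then to identify $\kappa_\beta^{-1}u$ with the Green potential of $g$ by an optional‑stopping argument.

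For the pointwise identity I would compute $\Delta^{\alpha/2}u$ directly from the definition; by the scaling of $\Delta^{\alpha/2}$ it is enough to do this at $x=1$. With $p=\alpha/2-\beta\in(\alpha/2-1,\alpha/2)$,
\begin{equation*}
	\Delta^{\alpha/2}u(1)=\pA_\alpha\ \mathrm{p.v.}\!\int_{-\infty}^{\infty}\bigl((1+y)_+^{p}-1\bigr)|y|^{-1-\alpha}\,dy.
\end{equation*}
Splitting at $y=0$ and $y=-1$ (on $(-\infty,-1)$ the first term is absent), integrating by parts in the remaining pieces, and recognising Euler Beta integrals — the principal value being needed near $0$ only for $\alpha\ge1$, where the odd part of $(1+y)^p-1$ cancels — reduces $\Delta^{\alpha/2}u(1)$ to a combination of Gamma functions. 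Using $\Gamma(z)\Gamma(1-z)=\pi/\sin(\pi z)$ (which turns $\pA_\alpha\Gamma(1-\alpha)/\alpha$ into $1/(2\cos(\pi\alpha/2))$) and $\Gamma(z+1)=z\Gamma(z)$, one checks that the expression collapses to $-\Gamma(\beta+\alpha/2)\Gamma(1-\beta+\alpha/2)/(\Gamma(\beta)\Gamma(1-\beta))=-\kappa_\beta$; carrying this bookkeeping through so that it matches \eqref{eq:kappabeta} exactly is the delicate part. Since $-1<p<1$ the integral converges, and $(1+y)_+^p=0$ for $y\le-1$ gives $u=0$ on $D^c$; by homogeneity $\Delta^{\alpha/2}u(x)=-\kappa_\beta x^{-\beta-\alpha/2}$ for all $x\in D$.

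To deduce \eqref{eq:1} I would use optional stopping. Fix $x\in D$ and $R>x$, set $D_R=(0,R)$, and apply Dynkin's formula to $u$ and the process killed on exiting $D_R$; as $u$ is smooth on $D_R$ and $\E^x\int_0^{\tau_{D_R}}|\Delta^{\alpha/2}u(X_s)|\,ds<\infty$ (which follows from \eqref{eq:oppt}),
\begin{equation*}
	\E^x\bigl[u(X_{\tau_{D_R}})\bigr]=u(x)-\kappa_\beta\,\E^x\!\int_0^{\tau_{D_R}}X_s^{-\beta-\alpha/2}\,ds.
\end{equation*}
Let $R\to\infty$, so $\tau_{D_R}\uparrow\tau_D$. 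Since $\kappa_\beta>0$ the integrand is nonnegative, and the last term increases to $\kappa_\beta\,\E^x\int_0^{\tau_D}X_s^{-\beta-\alpha/2}\,ds$. On the left, $X_{\tau_{D_R}}\in(-\infty,0)\cup(R,\infty)$ a.s. (the process has no Gaussian part, so it leaves $D_R$ by a jump), and $u$ vanishes on $(-\infty,0)$, so only an exit above $R$ contributes; that contribution is suppressed using the bound $\P^x(\sup_{s\le\tau_D}X_s>\lambda)\le c\,(x/\lambda)^{\alpha/2}$ — a known half‑line estimate reflecting the regularity of the $\alpha$‑harmonic function $x^{\alpha/2}$, which can also be read off from \eqref{eq:oppt} — which, because $\alpha/2-\beta<\alpha/2$, gives $\E^x[u(X_{\tau_{D_R}})]\to0$. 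Passing to the limit, $u(x)=\kappa_\beta\,\E^x\int_0^{\tau_D}X_s^{-\beta-\alpha/2}\,ds$ (in particular this integral is finite), which is \eqref{eq:1}.

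The main obstacle is the rigorous execution of this last step: $u$ is unbounded and $\Delta^{\alpha/2}u$ blows up at the boundary point $0$, so the truncation must be chosen with care, the integrability of $\int_0^{\tau_{D_R}}X_s^{-\beta-\alpha/2}\,ds$ must be verified through \eqref{eq:oppt}, and the supremum estimate is needed to kill the top‑exit terms as $R\to\infty$. The hypothesis $\beta>0$, i.e. $\alpha/2-\beta<\alpha/2$, is precisely what makes this work: for $\beta=0$ the same scheme would falsely predict $x^{\alpha/2}=0$, because then $\E^x[(\sup_{s\le\tau_D}X_s)^{\alpha/2}]=\infty$ and the boundary term does not vanish. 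An alternative, purely analytic route avoids probability altogether: insert the explicit Green function $G_D(x,y)=\int_0^\infty p_D(t,x,y)\,dt$ of the half‑line, use its homogeneity of degree $\alpha-1$ to reduce \eqref{eq:1} to the evaluation of $\int_0^\infty G_D(1,z)\,z^{-\beta-\alpha/2}\,dz$, and compute that one‑dimensional integral by Fubini and Beta integrals; there too the real work is checking that the constant equals $\kappa_\beta^{-1}$ with $\kappa_\beta$ as in \eqref{eq:kappabeta}.
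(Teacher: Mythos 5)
Your route is genuinely different from the paper's. The authors never compute $\Delta^{\alpha/2}(x_+^{\alpha/2-\beta})$; instead they write $G_D$ via the Hunt formula with the (compensated) potential kernel $K_\alpha$, push the measure $y^{-\beta-\alpha/2}dy$ through the Ikeda--Watanabe formula, and obtain a linear equation $I=C_1x^{\alpha/2-\beta}-C_2I$ for the unknown integral $I=\int_D G_D(x,y)y^{-\beta-\alpha/2}dy$, which they then solve (Proposition \ref{prop:5C}); the Gamma-function bookkeeping is done in Facts \ref{f.2}--\ref{f.5}. Your plan --- show $\Delta^{\alpha/2}u=-\kappa_\beta x^{-\beta-\alpha/2}$ with $u(x)=x_+^{\alpha/2-\beta}$ and identify $u$ with $\kappa_\beta$ times the Green potential by optional stopping --- is sound: the pointwise identity is a known formula (your sanity checks $p=0$ and $p=\alpha/2$ confirm the constant), and you correctly locate the role of $\beta>0$ in killing the boundary term at infinity, which is exactly what excludes an extra $\alpha$-harmonic summand $cx_+^{\alpha/2}$. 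What your approach buys is conceptual transparency (solve the Dirichlet problem explicitly); what the paper's buys is independence from knowing an explicit solution, which the authors flag as the point of their method.

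That said, as written the proposal has two real gaps, both of which you acknowledge but neither of which you close. First, the evaluation of the principal-value integral to get \emph{exactly} $-\kappa_\beta$ is the entire content of the theorem (the paper calls obtaining the constant ``the main difficulty''), and you leave it at ``one checks that the expression collapses''; moreover, carrying it out will force essentially the same case analysis the paper performs in Facts \ref{f.2}--\ref{f.5} (absolute convergence for $\alpha<1$, $\beta\in(\alpha/2,1-\alpha/2)$; compensation of $|1-w|^{\alpha-1}$-type kernels otherwise; logarithms at $\alpha=1$), so the apparent gain in simplicity is smaller than it looks. Second, the Dynkin/optional-stopping step is delicate for an unbounded $u$ whose fractional Laplacian blows up at $0$: you need $\E^x\int_0^{\tau_{D_R}}X_s^{-\beta-\alpha/2}ds<\infty$ (this does follow from the paper's Lemma \ref{l:op_int_pD_y}/Corollary \ref{c:op_int_pD_y}), and the top-exit term requires control of $\E^x[X_{\tau_{D_R}}^{\alpha/2-\beta};X_{\tau_{D_R}}>R]$, i.e.\ of the overshoot distribution, not merely the tail bound $\P^x(\sup_{s\le\tau_D}X_s>\lambda)\le c(x/\lambda)^{\alpha/2}$ you cite; making that precise essentially reintroduces the Ikeda--Watanabe/Poisson-kernel computations the paper uses directly. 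So: viable and genuinely different, but the two steps you defer are precisely where the work is.
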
 
\noindent 	According to \eqref{eq:pDdef}, in the probabilistic terms, the statement of Theorem \ref{thm:1} reads as follows:
	\begin{align*}
		\E^x\left(\int_0^{\tau_D} X_t^{-\beta-\alpha/2} dt \right) = \kappa^{-1}_{ \beta } x^{\alpha/2 -\beta},\qquad x \in D.
	\end{align*}

	As an application of Theorem \ref{tw:qbeta} we prove the Hardy identity for the Dirichlet form 
	\begin{equation*}
		\mathcal{E}_D(u,u) = \lim_{t \to 0+}\frac{1}{t} (u-P_t^Du,u),\qquad u\in L^2(D).
	\end{equation*}
	The subject of Hardy identities and inequalities was initiated in 1920, when Hardy \cite{MR1544414} discovered that
	\begin{equation}\label{eq:hardycls}
		\int_{0}^{\infty} \left[u'(x)\right]^2 dx \geq \frac{1}{4} \int_0^{\infty} \frac{u(x)^2}{x^2} dx,
	\end{equation}
	for absolutely continuous functions $u$ such that $u(0)=0$ and $u' \in L^2(0,\infty)$. Later, many generalizations of  \eqref{eq:hardycls} were proven, where the lefthand side of \eqref{eq:hardycls}  was  replaced by various symmetric Dirichlet forms $\mathcal{E}$ in the sense of Fukushima, Oshima, Takeda \cite{MR2778606}. In particular, for all 
	$d\ge1$, $0<\alpha<d \wedge 2$, $0 \leq \beta \leq d-\alpha$,
	and $u \in L^2(\R^d)$, the following Hardy-type identity holds (see \cite{MR2425175} and \cite{MR3460023})
	\begin{align}\label{eq:p2}
		\mathcal{E}(u,u)= 
		\kappa_{\beta}^{\R^d}
		\int_{\R^d} \frac{u(x)^2}{|x|^\alpha}\,dx
		+ \frac{1}{2} \int_{\R^d}\!\int_{\R^d}
		\left[\frac{u(x)}{|x|^{-\beta}}-\frac{u(y)}{|y|^{-\beta}}\right]^2
		|x|^{-\beta}|y|^{-\beta} \nu(x,y)
		\,dy\,dx,
	\end{align}
	where  $\mathcal{E}(u,u) = \lim_{t \to 0+}\frac{1}{t} (u-P_t u,u)$, $P_t$ is the stable semigroup on $\R^d$ and
	\begin{align}\label{eq:kappabRd}
		\kappa_\beta^{\R^d} = \frac{2^\alpha \Gamma((\beta+\alpha)/2)\Gamma((d-\beta)/2)}{\Gamma(\beta/2)\Gamma((d-\alpha-\beta)/2)}, \qquad 0< \beta < d-\alpha.
	\end{align}
	The identity \eqref{eq:p2} is also called a ground state representation. It yields the Hardy type inequality for $\mathcal{E}$ (see also \cite{MR0436854}, \cite{MR1254832} and \cite{MR1717839})
	\begin{align}\label{eq:HardyIneqRd}
		\mathcal{E}(u,u)\ge 
		\kappa_{\beta}
		\int_{\R^d} \frac{u(x)^2}{|x|^\alpha}\,dx.
	\end{align}
	
	We get the analogous result to \eqref{eq:p2} for the form $\mathcal{E}_D$.
	\begin{theorem}\label{thm:HardyId}
		If $u \in L^2(D),$ then 
		\begin{align}\label{eq:HardyId}
			\mathcal{E}_D(u,u) = \kappa_{ \beta } \int_D \frac{u^2(x)}{ x^{\alpha} } dx + \frac{1}{2} \int_D \int_D \bigg( \frac{u(x)}{x^{		\alpha/2 - \beta}} - \frac{u(y)}{y^{\alpha/2 - \beta}} \bigg)^2 x^{\alpha/2 - \beta} y^{\alpha/2 - \beta} \nu(x-y) dxdy.
		\end{align}
	\end{theorem}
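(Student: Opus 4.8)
The plan is to obtain \eqref{eq:HardyId} from three ingredients: the jump-kernel representation of the form $\mathcal{E}_D$, the elementary ``ground-state substitution'' $u = h_\beta v$, and the explicit values $h_\beta(x) = x^{\alpha/2-\beta}$ (Lemma~\ref{l:hbeta}) and $q_\beta(x) = \kappa_\beta x^{-\alpha}$ (Theorem~\ref{tw:qbeta}). The mechanism is that, by the very construction of \cite{MR3460023}, $q_\beta = -L_D h_\beta / h_\beta$, where $L_D$ denotes the generator of $P_t^D$, acting on a function $w$ on $D$ (extended by $0$ to $\R$) as $L_D w(x) = \mathrm{p.v.}\int_D (w(y)-w(x))\nu(x-y)\,dy - \kappa_D(x)w(x)$ with killing density $\kappa_D(x) := \int_{D^c}\nu(x-y)\,dy = \frac{\pA_\alpha}{\alpha}x^{-\alpha}$.

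First I would record the Beurling--Deny representation: for every $u\in L^2(D)$,
\begin{equation}\label{eq:BDrep}
	\mathcal{E}_D(u,u) = \frac{1}{2}\int_D\int_D (u(x)-u(y))^2\,\nu(x-y)\,dx\,dy + \int_D u(x)^2\,\kappa_D(x)\,dx \in [0,\infty].
\end{equation}
This is classical (see \cite{MR2778606}); alternatively it follows from \eqref{eq:pDdef}--\eqref{eq:prob_tau_D} on writing $(u-P_t^D u,u) = \frac{1}{2}\iint_{D\times D} p_D(t,x,y)(u(x)-u(y))^2\,dy\,dx + \int_D u(x)^2\,\P^x(\tau_D\le t)\,dx$, dividing by $t$ and letting $t\to 0+$, with $p_D(t,x,y)/t\to\nu(x-y)$ (dominated via \eqref{eq:oppt}) and $\P^x(\tau_D\le t)/t\to\kappa_D(x)$.

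Next I would substitute $h:=h_\beta$, $v:=u/h$ and use the pointwise identity
\[
	(u(x)-u(y))^2 = h(x)h(y)(v(x)-v(y))^2 + v(x)^2 h(x)(h(x)-h(y)) + v(y)^2 h(y)(h(y)-h(x)).
\]
Since for $\alpha\ge 1$ the last two terms need not be separately integrable against $\nu(x-y)$ near the diagonal, I would first truncate, replacing $\nu$ by $\nu_\varepsilon(z):=\nu(z)\mathbf{1}_{\{|z|>\varepsilon\}}$; then every integral below converges absolutely and, using $\nu(-z)=\nu(z)$ to merge the two cross terms,
\begin{equation}\label{eq:epsstep}
\begin{split}
	\frac{1}{2}\int_D\int_D (u(x)-u(y))^2\nu_\varepsilon(x-y)\,dx\,dy
	&= \frac{1}{2}\int_D\int_D h(x)h(y)(v(x)-v(y))^2\nu_\varepsilon(x-y)\,dx\,dy \\
	&\quad + \int_D \frac{u(x)^2}{h(x)}\Big(\int_D (h(x)-h(y))\,\nu_\varepsilon(x-y)\,dy\Big)\,dx.
\end{split}
\end{equation}
Letting $\varepsilon\to 0+$, the two ``square'' terms converge by monotone convergence, while the inner integral in the last term tends to $-\mathrm{p.v.}\!\int_D(h(y)-h(x))\nu(x-y)\,dy = -L_D h(x)-\kappa_D(x)h(x) = \big(q_\beta(x)-\kappa_D(x)\big)h(x)$, using $-L_D h_\beta = q_\beta h_\beta$ and Theorem~\ref{tw:qbeta}. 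Since $\frac{u^2}{h}\cdot(q_\beta-\kappa_D)h = u^2(q_\beta-\kappa_D)$, passing to the limit in \eqref{eq:epsstep} and adding the killing term of \eqref{eq:BDrep} cancels the $\kappa_D$-contributions and gives $\mathcal{E}_D(u,u) = \int_D u^2 q_\beta + \frac{1}{2}\iint_{D\times D} h(x)h(y)(v(x)-v(y))^2\nu(x-y)$, i.e.\ exactly \eqref{eq:HardyId} after inserting $q_\beta(x)=\kappa_\beta x^{-\alpha}$, $h(x)=x^{\alpha/2-\beta}$, $v=u/h$.

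The hard part will be the analytic bookkeeping in these limit passages --- precisely the ``compensation of kernels and divergent integrals'' mentioned in the introduction. One must control $h_\beta(x)=x^{\alpha/2-\beta}$ and $q_\beta$ near $0$ and near $\infty$ (the sign of $\alpha/2-\beta$ and the Hardy-critical blow-up $x^{-\alpha}$ of $q_\beta$ matter) to justify that $\int_D(h(x)-h(y))\nu_\varepsilon(x-y)\,dy\to(q_\beta(x)-\kappa_D(x))h(x)$ together with convergence of the outer $x$-integral, and one must verify that the two sides of \eqref{eq:HardyId} are simultaneously finite or infinite, so that the identity is meaningful for all $u\in L^2(D)$ and not only on the form domain. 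I would organize this by first proving \eqref{eq:HardyId} for $u$ bounded with support a compact subset of $D$ (there $\mathcal{E}_D(u,u)<\infty$, \eqref{eq:BDrep} is elementary, and every integral in \eqref{eq:epsstep} is finite), and then extending to arbitrary $u\in L^2(D)$ via the truncations $u_n:=\big(\big((-n)\vee u\big)\wedge n\big)\mathbf{1}_{(1/n,n)}$ and term-by-term Fatou/monotone convergence applied to the nonnegative ingredients of \eqref{eq:HardyId}, using that such $u$ form a core of $\mathcal{E}_D$.
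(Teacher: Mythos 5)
Your proposal takes a genuinely different route from the paper, and while its architecture (Beurling--Deny decomposition plus ground-state substitution $u=h_\beta v$) is sound and classical, it has two real gaps. The paper's own proof is short: it invokes \cite[Theorem 2]{MR3460023}, which already gives \eqref{eq:HardyId} with $p_D(t,x,y)/(2t)$ in place of $\nu(x-y)/2$ and a limit $t\to0+$ outside the double integral, and then passes to the limit via Lemma \ref{lemat_nu_granica} together with the domination $p_D(t,x,y)/t\le c\,\nu(x-y)$ (dominated convergence when the double integral is finite, Fatou when it is infinite). You instead hide the main analytic content in the unproven assertion that $-L_Dh_\beta=q_\beta h_\beta$ with $L_D$ realized as the pointwise principal-value operator. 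Two distinct claims are bundled there: (i) that $q_\beta h_\beta$ equals the semigroup derivative $\lim_{t\to0+}t^{-1}(h_\beta-P_t^Dh_\beta)$ --- this does follow from the construction by integrating by parts in $t$, but needs a limit interchange, and $h_\beta(x)=x^{\alpha/2-\beta}\notin L^2(D)$ is not in the $L^2$-domain of the generator; and (ii) that this derivative equals $\mathrm{p.v.}\int_D(h_\beta(x)-h_\beta(y))\nu(x-y)\,dy+h_\beta(x)\int_{D^c}\nu(x-z)\,dz$, which requires exactly the kind of $p_D(t,x,y)/t\to\nu(x-y)$ analysis as Lemma \ref{lemat_nu_granica}, now with control of the tails of the unbounded, non-integrable $h_\beta$ near $0$ and $\infty$ and the p.v.\ cancellation near the diagonal for $\alpha\ge1$. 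Neither claim is available in the paper (Theorem \ref{tw:qbeta} defines $q_\beta$ through a time integral of the semigroup, not through the generator), so this step must be proved, not cited "by the very construction"; it is precisely the content that \cite[Theorem 2]{MR3460023} supplies and that the paper outsources.

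The second gap is the extension from bounded, compactly supported $u$ to all of $L^2(D)$. Your truncations $u_n=\big(((-n)\vee u)\wedge n\big)\mathbf{1}_{(1/n,n)}$ do not make the jump energies monotone: if $x\in(1/n,n)$, $y\notin(1/n,n)$ and $u(x)$, $u(y)$ are close and of the same sign, then $|u_n(x)-u_n(y)|$ exceeds $|u(x)-u(y)|$, so neither monotone convergence nor a term-by-term Fatou argument yields the identity in the limit; and "such $u$ form a core" only controls $\mathcal{E}_D$ on its domain, whereas \eqref{eq:HardyId} is asserted for every $u\in L^2(D)$ with both sides possibly infinite. The paper avoids this entirely because the pre-limit identity of \cite[Theorem 2]{MR3460023} holds for arbitrary $u\in L^2(D)$ and only a single term depends on $t$. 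If you want to keep your route, I would prove the pointwise equation for $h_\beta$ directly (by scaling, $\mathrm{p.v.}\int_D(h_\beta(x)-h_\beta(y))\nu(x-y)\,dy=c\,x^{-\alpha/2-\beta}$ and only the constant must be identified, e.g.\ by beta-function computations in the spirit of Facts \ref{f.2}--\ref{f.4}) and then run the $\varepsilon$-truncation argument for all $u\in L^2(D)$ at once rather than through a core.
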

As a corollary, in Proposition \ref{Prop:HardyIn} we give a new proof of the Hardy inequality for $\pE_D$ obtained in \cite{MR2663757} (see also \cite{210901704} for its generalization to Sobolev-Bregman forms). For $\beta=1/2$, the identity \eqref{eq:HardyId} is a special case of \cite[Theorem 1.2]{MR2723817} with  $p=2$ and $N=1$.  We note that the formula \eqref{eq:HardyId} may be derived from \eqref{eq:p2} with $d=1$ by taking $u$ with support in $D$, see Remark \ref{rem:3}. However, the range of arguments in this case is limited to $\alpha \in (0,1)$ and $\beta \in (\alpha/2,1-\alpha/2)$. Note that in our approach this case is the easiest one (see e.g. Fact \ref{f.2}) and the generalization to the whole range of parameters $\alpha \in(0,2)$ and $\beta \in(0,1)$ is non-trivial.

\subsection{Further discussion}
	
	If we compare our result for $\alpha \in (0,2)$ with the case $\alpha=2$ and $d=1$, we may observe several similarities. First, note that for $\beta \in (0,1)$ and sufficiently regular $u$ we have the following Hardy identity (see e.g. \cite[Theorem 3]{MR4388136} for the special case $\beta=1/2$)
\begin{align*}
\int_0^\infty [u'(x)]^2 dx = \beta(1-\beta) \int_0^\infty \frac{u^2(x)}{x^2} dx + \int_0^\infty \left[x^{1-\beta} (x^{\beta-1}u(x))'\right]^2 dx.	
\end{align*}
Hence, \eqref{eq:HardyId} may be considered as a fractional counterpart of the formula above.	Additionally, if we put $\alpha=2$ in \eqref{eq:kappabeta}, we get $\kappa_{\beta} = \beta(1-\beta)$. Both, for $\alpha\in(0,2)$ and $\alpha=2$,  $\kappa_\beta$ attains its maximum for $\beta = 1/2$ (see Figure \ref{fig:k}) and $\kappa_{1/2} = \Gamma((\alpha+1)/2)^2/\pi$ is the best constant in the Hardy inequality.  
	
The subject of this paper is also strictly connected with Schr\"odinger perturbations of semigroups by Hardy potential, see e.g. \cites{MR3479207, MR3492734, MR3933622, MR4140086, MR4216547, MR2425175, MR4163128,MR4206613}. 	In particular, in \cite{MR3933622}  the authors studied the Schr\"odinger perturbations of fractional Laplacian in $\R^d$  by $\kappa |x|^{-\alpha}$ and obtained the sharp estimates of the perturbed semigroup. It turns out that the critical value of $\kappa$ for which the perturbed density is finite coincides with the best constant in the Hardy inequality \eqref{eq:HardyIneqRd}. 

We point out that Theorem \ref{tw:qbeta} may be the starting point for further study of Schr\"odinger perturbation of $p_D$ by the potential $\kappa x^{-\alpha}$. Here, we may expect that like in \cite{MR742415} and \cite{MR3933622} the best constant $\kappa_{1/2}$ in Hardy inequality \eqref{Eq:HardyIn} is also critical in the sense that for $\kappa > \kappa_{1/2}$ we will have instantaneous blow-up of the perturbed heat kernel $\tilde{p}$. To get this result probably one has to get the proper estimates of $\tilde{p}$ (see e.g. \cite{MR3933622}), which we postpone to a forthcoming paper.

	The last remark concerns the range of parameters $\alpha$ and $\beta$. We note that like in \eqref{eq:p2} the authors in \cite{MR3933622} assumed  that $\alpha <d$, which for $d=1$ gives the restriction $\alpha<1$. This condition was imposed to get the integrability of the potential $|x|^{-\alpha}$ at $0$. However, in our case by considering the process killed on exiting the halfline, the additional decay at $0$ of the kernel $p_D$ permits to study the perturbations by $\kappa x^{-\alpha}$ for the whole range of $\alpha \in (0,2)$. Moreover, since $p_D(t,x,y) < p(t,x,y)$, potentially the bigger perturbation may be considered also for $\alpha<1$. In Remark \ref{rem:2}, we show that for $\beta \in (0, (1-\alpha)/2)$, $\kappa_\beta > \kappa_\beta^\R$, where $\kappa_\beta^\R$ is given by \eqref{eq:kappabRd} (see Figure \ref{fig:2}).

	The paper is organized as follows. In Section \ref{sec:2} we calculate some auxiliary integrals involving gamma functions and give some definitions from potential theory. In Section \ref{sec:3} we prove Theorems \ref{tw:qbeta} and \ref{thm:1}. In Section \ref{sec:4} we apply Theorem \ref{tw:qbeta} to study Hardy identities for the quadratic form connected with the semigroup $P_t^D$.

	\section{Preliminaries}\label{sec:2}
	\subsection{Gamma and Beta functions}
	For $x>0$, we define the gamma function
	\begin{equation*}
		\Gamma(x) = \int_{0}^{\infty} t^{x-1} e^{-t} dt.
	\end{equation*}
	
	\noindent By using the property $x\Gamma(x) = \Gamma(x+1)$
	we extend the definition of $\Gamma(x)$ to negative non-integer values of $x$.
	The beta function is defined by
	\begin{equation}\label{eq:beta_def}
		B(x,y) = \frac{ \Gamma(x) \Gamma(y) }{ \Gamma(x+y) },
	\end{equation}
	for $x,y \in \mathbb{R} \setminus \{0,-1, -2, \dots\}$.
	Recall that
	\begin{align} \label{eq:beta_def_2}
		B(x,y) &= \int_{0}^{1} t^{x-1} (1-t)^{y-1} dt = \int_{0}^{\infty} \frac{t^{x-1}}{ (1+t)^{x+y} }dt,  \qquad x,y >0.
	\end{align}

	\begin{lemma} \label{l:int_as_beta_plus_c}
		Let $a \in (-1,0)$ and $b>0$. We have
		\begin{equation*}
			\int_0^1 s^{a-1} \big( (1-s)^{b-1} - 1 \big)ds = - \frac{1}{a} + B(a,b).
		\end{equation*}
	\end{lemma}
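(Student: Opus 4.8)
The plan is to establish the identity first for $a>0$, where both terms are absolutely convergent integrals, and then to propagate it to $a\in(-1,0)$ by analytic continuation in the variable $a$, keeping $b>0$ fixed throughout.

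For $a>0$ and $b>0$ the integrals $\int_0^1 s^{a-1}(1-s)^{b-1}\,ds$ and $\int_0^1 s^{a-1}\,ds$ converge separately, so by linearity and \eqref{eq:beta_def_2} one gets $\int_0^1 s^{a-1}\big((1-s)^{b-1}-1\big)\,ds = B(a,b)-1/a$, which is the claimed formula on $(0,\infty)$. Now fix $b>0$ and set, for complex $z$ with $\operatorname{Re} z>-1$,
\[
F(z)=\int_0^1 s^{z-1}\big((1-s)^{b-1}-1\big)\,ds ,\qquad
G(z)=-\tfrac1z+B(z,b)=-\tfrac1z+\frac{\Gamma(z)\Gamma(b)}{\Gamma(z+b)} .
\]
The fact that makes $F$ well defined up to the line $\operatorname{Re} z=-1$ is the cancellation $(1-s)^{b-1}-1=O(s)$ as $s\to0^+$: one obtains $|s^{z-1}((1-s)^{b-1}-1)|\le C_b\,s^{\operatorname{Re} z}$ near $0$ and $\le C_b\big((1-s)^{b-1}+1\big)$ near $1$, both integrable and locally uniform in $z$ on $\{\operatorname{Re} z>-1\}$; hence $F$ is holomorphic there (differentiation under the integral sign, or Morera together with Fubini).

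As for $G$, in the strip $\{\operatorname{Re} z>-1\}$ the only singularity of $B(\cdot,b)$ is the simple pole of $\Gamma(z)$ at $z=0$ — the pole of $\Gamma(z+b)$ at $z=-b$, present only when $b<1$, produces a \emph{zero}, not a pole, of $B$ — and since $\Gamma(z)=\tfrac1z+O(1)$ near $0$, this pole is removed by the term $-\tfrac1z$; thus $G$ too is holomorphic on $\{\operatorname{Re} z>-1\}$. By the first step $F=G$ on $(0,\infty)$, a set with a limit point in the connected open set $\{\operatorname{Re} z>-1\}$, so the identity theorem yields $F\equiv G$ there; specializing to $z=a\in(-1,0)$ finishes the proof.

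The step requiring genuine care is the holomorphy of $F$ up to $\operatorname{Re} z=-1$, i.e.\ the uniform-in-$z$ domination near $s=0$, which is precisely where one uses that subtracting $1$ tames the non-integrable $\int_0^1 s^{a-1}\,ds$. A purely real-variable alternative is to integrate by parts in the target integral with the antiderivative $(s^a-1)/a$ of $s^{a-1}$ — the boundary contributions vanish at both ends, at $s=1$ because $(s^a-1)/a\sim-(1-s)$ and $(1-s)^{b-1}-1\sim(1-s)^{b-1}$ there — reducing matters to $\tfrac{b-1}{a}\int_0^1(s^a-1)(1-s)^{b-2}\,ds$, which after the substitution $s\mapsto1-s$ and the first step evaluates explicitly for $b>1$ (using $\Gamma(z+1)=z\Gamma(z)$) and then extends to $0<b\le1$ by continuity in $b$ (dominated convergence; the case $b=1$ is trivial since both sides vanish).
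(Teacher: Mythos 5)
Your argument is correct, but it takes a genuinely different route from the paper's. The paper stays entirely in real variables: it writes $s^{a-1}=s^{a-1}(1-s)+s^{a}$, integrates the first piece by parts (the boundary terms vanish by the same $O(s)$ cancellation of $(1-s)^{b-1}-1$ at $s=0$ that you isolate), and so reduces everything to Beta integrals with first argument $a+1>0$; the recursion $(a+b)B(a+1,b)=aB(a,b)$ then yields $a\int_0^1 s^{a-1}\big((1-s)^{b-1}-1\big)\,ds=aB(a,b)-1$ in one chain of equalities valid directly for $a\in(-1,0)$. You instead prove the identity where it is trivial ($a>0$) and transport it to $(-1,0)$ by analytic continuation; your verification that the subtracted $1$ makes $F$ holomorphic up to $\operatorname{Re} z=-1$, and that $-1/z$ removes the only pole of $B(\cdot,b)$ in that half-plane (with $z=-b$ correctly identified as a zero rather than a pole when $b<1$), is complete. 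Both proofs ultimately rest on the same two ingredients --- the cancellation at $s=0$ and the functional equation of $\Gamma$ --- but yours trades the paper's short, slightly magical manipulation for standard complex-analytic machinery and, as a bonus, explains the formula: $-1/a$ is exactly the principal part of $B(\cdot,b)$ at $z=0$. Two small points. First, it is worth a sentence that the paper's $B(a,b)$ for $a\in(-1,0)$ is defined through the Gamma function extended by $x\Gamma(x)=\Gamma(x+1)$, which coincides with the value of your meromorphic continuation at $z=a$ precisely because that recursion characterizes the continuation. Second, in your real-variable alternative the boundary term at $s=0$ also needs a word: for $a\in(-1,0)$ the antiderivative $(s^{a}-1)/a$ blows up like $s^{a}/a$ there, and it is again the $O(s)$ factor from $(1-s)^{b-1}-1$ that kills the product; as written you only justify the endpoint $s=1$.
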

	\begin{proof}
		Note that $\lim_{s\to0} s^{-1}\big( (1-s)^{b-1} - 1 \big) = 1-b $. Hence,
		using the integration by parts, \eqref{eq:beta_def_2} 
		and \eqref{eq:beta_def}, we get 
		\begin{align*}
			a \int_0^1 s^{a-1} \big( (1-s)^{b-1} - 1 \big)ds &= a \int_0^1 (s^{a-1}(1-s) + s^a) \big( (1-s)^{b-1} - 1 \big)ds \\
			&= \int_0^1 as^{a-1} \big( (1-s)^{b} - (1-s) \big)ds + a \int_0^1 s^a \big( (1-s)^{b-1} - 1 \big)ds \\
			&= \int_0^1 s^{a} \big( b(1-s)^{b-1} - 1 \big)ds + a \int_0^1 s^a \big( (1-s)^{b-1} - 1 \big)ds \\		
			&= (a+b) B(a+1,b) -\frac{1}{a+1}-\frac{a}{a+1} = a B(a,b) -1.		
		\end{align*}
	\end{proof}

	\noindent For $\beta \in (0,1)$ and $\alpha \in (0,2)$ such that $\beta + \alpha/2 \not=1$ and $\beta-\alpha/2 \not=0$ we define
	\begin{align} 
		\widehat{C}_{\alpha, \beta} &= B(1-\beta - \alpha/2, \alpha) + B(\alpha, \beta - \alpha/2), \label{const.1} \\
		\widetilde{C}_{\alpha, \beta} &= B(1-\beta - \alpha/2, \beta - \alpha/2). \label{const.2}
	\end{align}
	
	\begin{fact} \label{f.2}
		For $\alpha \in (0,1)$ and $\beta \in (\alpha/2, 1 - \alpha/2)$, we have
		\begin{align}
			\int_D |1 - w|^{\alpha - 1} w^{-\beta - \alpha/2} dw = \widehat{C}_{\alpha, \beta}, \label{eq1:f.2}\\
			\int_D (1 + w)^{\alpha - 1} w^{-\beta - \alpha/2} dw = \tC_{\alpha, \beta}. \label{eq2:f.2}
		\end{align}
		
	\end{fact}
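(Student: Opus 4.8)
The plan is to reduce both identities to the Beta‑function integral representations recorded in \eqref{eq:beta_def_2}, after elementary manipulations. Throughout one uses that the hypotheses $\alpha\in(0,1)$ and $\beta\in(\alpha/2,1-\alpha/2)$ make this interval for $\beta$ nonempty and force all the Beta‑function arguments appearing below to be strictly positive; this is exactly what legitimizes \eqref{eq:beta_def_2} and guarantees convergence of the integrals at $0$ and at $\infty$.

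For \eqref{eq2:f.2} I would apply the second representation in \eqref{eq:beta_def_2}, namely $B(x,y)=\int_0^\infty t^{x-1}(1+t)^{-x-y}\,dt$, with the choice $x=1-\beta-\alpha/2$ and $y=\beta-\alpha/2$. Then $x-1=-\beta-\alpha/2$ and $x+y=1-\alpha$, so the integrand becomes precisely $(1+w)^{\alpha-1}w^{-\beta-\alpha/2}$, and the integral equals $B(1-\beta-\alpha/2,\beta-\alpha/2)=\tC_{\alpha,\beta}$. The conditions $x>0$ and $y>0$ read $\beta<1-\alpha/2$ and $\beta>\alpha/2$, both in force.

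For \eqref{eq1:f.2} I would split $D=(0,\infty)$ at $w=1$:
\[
\int_D |1-w|^{\alpha-1}w^{-\beta-\alpha/2}\,dw
=\int_0^1 (1-w)^{\alpha-1}w^{-\beta-\alpha/2}\,dw+\int_1^\infty (w-1)^{\alpha-1}w^{-\beta-\alpha/2}\,dw .
\]
The first summand is $B(1-\beta-\alpha/2,\alpha)$ by the first representation in \eqref{eq:beta_def_2}. For the second summand I would substitute $w=1/u$; simplifying $(1/u-1)^{\alpha-1}=(1-u)^{\alpha-1}u^{1-\alpha}$ and collecting powers of $u$ turns $(w-1)^{\alpha-1}w^{-\beta-\alpha/2}\,dw$ into $(1-u)^{\alpha-1}u^{\beta-\alpha/2-1}\,du$ on $(0,1)$, so the second summand equals $B(\beta-\alpha/2,\alpha)$. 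Adding the two pieces and using the symmetry $B(a,b)=B(b,a)$, immediate from \eqref{eq:beta_def}, yields $B(1-\beta-\alpha/2,\alpha)+B(\alpha,\beta-\alpha/2)=\hC_{\alpha,\beta}$.

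There is no genuine obstacle here; the only points requiring care are the bookkeeping of the parameter ranges that validate \eqref{eq:beta_def_2} and carrying out the inversion $w\mapsto 1/u$ correctly. It is the symmetry of $B$ that makes the $\int_1^\infty$ piece match the form in which $\hC_{\alpha,\beta}$ is written in \eqref{const.1}.
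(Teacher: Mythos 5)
Your proof is correct and follows essentially the same route as the paper: split the first integral at $w=1$, recognize each piece as a Beta integral via \eqref{eq:beta_def_2}, and read off \eqref{eq2:f.2} directly from the second representation in \eqref{eq:beta_def_2}. The only cosmetic difference is that on $(1,\infty)$ you substitute $w\mapsto 1/u$ and invoke the symmetry of $B$, whereas the paper shifts $w\mapsto w+1$ and uses the representation $B(x,y)=\int_0^\infty t^{x-1}(1+t)^{-x-y}\,dt$ directly; both give $B(\alpha,\beta-\alpha/2)$.
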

	\begin{proof}
		By \eqref{eq:beta_def} and \ref{eq:beta_def_2}, we have
		\begin{align*}
			\int_D |1 - w|^{\alpha - 1} w^{-\beta - \alpha/2} dw &= \int_0^1 (1-w)^{\alpha -1} w^{-\beta - \alpha/2} dw + \int_0^{\infty} w^{\alpha - 1} (w+1)^{-\beta - \alpha/2} dw \\ &= B(1 - \beta - \alpha/2, \alpha) + B(\alpha, \beta - \alpha/2).
		\end{align*}
		The second equality follows directly from \eqref{eq:beta_def_2}.
	\end{proof}
	\noindent 	We will also need similar equalities for wider range of parameters $\alpha$ and $\beta$. To secure convergence of integrals we compensate the expressions $|1-w|^{\alpha-1}$ and $(1+w)^{\alpha-1}$, which improves the decay near 0 and 1.

	\begin{fact} \label{f.1}
		In the cases of $\alpha \in (0,1)$, $\beta \in (0, \alpha/2)$ and $\alpha \in (1,2),$ $\beta \in(0,1-\alpha/2)$, we have
		\begin{equation} \label{eq1:f.1}
			\int_D \big( |1 - w|^{\alpha - 1} - w^{\alpha - 1} \big)w^{-\beta - \alpha/2} dw = \widehat{C}_{\alpha, \beta}, 
		\end{equation}
		and
		\begin{equation} \label{eq2:f.1}
			\int_D \big( (1 + w)^{\alpha - 1} - w^{\alpha - 1} \big)w^{-\beta - \alpha/2} dw = \widetilde{C}_{\alpha, \beta}. 
		\end{equation}
	\end{fact}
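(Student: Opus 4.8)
The plan is to reduce the two identities to Fact \ref{f.2} by analytic continuation in the parameters, using the compensation term $w^{\alpha-1}$ to kill the divergence that appears when $\beta$ leaves the range $(\alpha/2,1-\alpha/2)$. First I would record that $\int_D w^{\alpha-1}w^{-\beta-\alpha/2}dw$ is itself divergent (at $0$ when $\alpha/2-\beta\le 0$, at $\infty$ always for $\alpha\in(0,1)$ and at both ends appropriately in the other case), so the subtraction on the left-hand side is genuinely needed for convergence; then the point is that the difference $|1-w|^{\alpha-1}-w^{\alpha-1}$ behaves like $O(w^{\alpha-2})$ as $w\to\infty$ and is bounded near $0$ (indeed $\to 1$ as $w\to 0$), which secures integrability against $w^{-\beta-\alpha/2}$ precisely in the stated ranges. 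The analogous observation applies to $(1+w)^{\alpha-1}-w^{\alpha-1}$.

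For \eqref{eq1:f.1} I would split the integral at $w=1$. On $(1,\infty)$ substitute $w\mapsto 1/w$ (or directly $w\mapsto w-1$ on one piece) so that the whole thing is expressed through integrals of the form $\int_0^1 s^{p-1}(1-s)^{q-1}ds$ with one of the exponents shifted; each such integral, when individually convergent, equals a Beta function, and when individually divergent the divergence is exactly cancelled by the corresponding piece coming from $w^{\alpha-1}$. Concretely, I expect the combination to collapse — after the change of variables — to an expression of the shape $\int_0^1 s^{a-1}\big((1-s)^{b-1}-1\big)ds$ with $a=\beta-\alpha/2\in(-1,0)$ (or $a=1-\beta-\alpha/2$ in the second regime) and $b=\alpha$, at which point Lemma \ref{l:int_as_beta_plus_c} applies and yields $-1/a+B(a,b)$. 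Adding the two halves, the $-1/a$ terms should cancel against each other (the two endpoints $0$ and $\infty$ contribute opposite residues) and what survives is $B(1-\beta-\alpha/2,\alpha)+B(\alpha,\beta-\alpha/2)=\widehat C_{\alpha,\beta}$, matching \eqref{const.1}. The identity \eqref{eq2:f.1} is easier: on $(0,\infty)$ the substitution $w=t/(1-t)$, $t\in(0,1)$, turns $\int_D(1+w)^{\alpha-1}w^{-\beta-\alpha/2}dw$ formally into $\int_0^1 t^{-\beta-\alpha/2}(1-t)^{\beta-\alpha/2-1}\cdot(\text{something})\,dt$; again the subtracted $w^{\alpha-1}$ term supplies the compensating $``-1"$ so that Lemma \ref{l:int_as_beta_plus_c} gives $B(1-\beta-\alpha/2,\beta-\alpha/2)=\widetilde C_{\alpha,\beta}$.

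Alternatively — and this may be the cleanest write-up — one can argue by analytic continuation: both sides of \eqref{eq1:f.1} and \eqref{eq2:f.1}, viewed as functions of $\beta$ (with $\alpha$ fixed), are holomorphic in a strip containing both the region of Fact \ref{f.2} and the regions claimed here, since the compensated integrand is dominated uniformly on compact $\beta$-sets and the Beta functions $\widehat C_{\alpha,\beta},\widetilde C_{\alpha,\beta}$ extend meromorphically with no poles in the relevant strip. On the overlap region $\beta\in(\alpha/2,1-\alpha/2)$ (when $\alpha<1$) the subtracted term $\int_D w^{\alpha-1}w^{-\beta-\alpha/2}dw$ is \emph{not} convergent, so one cannot directly equate with Fact \ref{f.2}; instead I would verify the identity at a single convenient value (or on a subinterval) by a direct computation and then propagate it by the identity theorem.

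The main obstacle I anticipate is bookkeeping of the divergences: one must make the cancellation of the $-1/a$ terms (equivalently, of the boundary terms in the integration by parts) completely explicit and check that the split-and-substitute manipulations are legitimate, i.e. performed on already-convergent integrals rather than on the divergent pieces in isolation. The case $\alpha\in(1,2)$ requires a little extra care because then $w^{\alpha-1}$ grows, so the compensation is needed at $\infty$ rather than (or in addition to) at $0$, and the roles of the two Beta-function summands are interchanged relative to the $\alpha<1$ case; I would treat the two parameter regimes as separate sub-cases with the same underlying mechanism.
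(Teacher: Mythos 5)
Your plan for \eqref{eq1:f.1} is exactly the paper's proof: split at $w=1$, substitute $w+1=s^{-1}$ on the tail, reduce the compensated piece to $\int_0^1 s^{a-1}\big((1-s)^{\alpha-1}-1\big)ds$ with $a=\beta-\alpha/2\in(-1,0)$, apply Lemma \ref{l:int_as_beta_plus_c}, and observe that the term $\tfrac{1}{\alpha/2-\beta}$ from $\int_0^1 w^{\alpha/2-\beta-1}dw$ cancels the $-1/a$ produced by the lemma. (One small correction: in both parameter regimes of the Fact the exponent is $a=\beta-\alpha/2$ and $\alpha/2-\beta>0$, so nothing is ``interchanged'' for $\alpha\in(1,2)$; the regime where the compensation must act at $w=1$ rather than at $0$ and $\infty$ is Fact \ref{f.4}, not this one.) For \eqref{eq2:f.1} you genuinely diverge from the paper: your substitution $w=t/(1-t)$ leads to $\int_0^1(1-t^{\alpha-1})t^{-\beta-\alpha/2}(1-t)^{\beta-\alpha/2-1}dt$, which after $t\mapsto 1-s$ is a \emph{difference} of two integrals of the lemma's type, with parameters $(a,b_1)=(\beta-\alpha/2,\,1-\beta-\alpha/2)$ and $(a,b_2)=(\beta-\alpha/2,\,\alpha/2-\beta)$; the $-1/a$ terms cancel between the two, and you must additionally note that the second contribution is exactly $-1/a$, i.e.\ the degenerate ``$B(a,-a)$'' vanishes (this follows from the identity $a\int_0^1 s^{a-1}((1-s)^{b-1}-1)ds=(a+b)B(a+1,b)-1$ in the lemma's proof with $b=-a$, but it is outside the paper's definition \eqref{eq:beta_def} of $B$, so it needs to be said). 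The paper avoids this entirely by writing $(1+w)^{\alpha-1}-w^{\alpha-1}=(\alpha-1)\int_0^1(w+t)^{\alpha-2}dt$ and applying Fubini, which gives $\widetilde{C}_{\alpha,\beta}$ in one line; your route works but costs one extra observation. Your ``analytic continuation'' alternative is correctly self-diagnosed as problematic (there is no parameter range where both Fact \ref{f.2} and Fact \ref{f.1} hold simultaneously, since the subtracted $\int_D w^{\alpha/2-\beta-1}dw$ always diverges at $\infty$ for $\alpha<1$), so the direct computation is the right choice.
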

	\begin{proof}
		We have 
		\begin{align*}
			&\int_{D} \big( |1-w|^{\alpha - 1} - w^{\alpha - 1} \big) w^{-\beta - \alpha/2} dw \\ &= \int_{0}^{1} \big(  (1 - w)^{\alpha - 1} - w^{\alpha - 1} \big) w^{-\beta - \alpha/2} dw + \int_{0}^{\infty} \big( w^{\alpha - 1} - (w + 1)^{\alpha - 1} \big) (w + 1)^{-\beta - \alpha/2} dw.
		\end{align*}
		By (\ref{eq:beta_def_2}), the first integral is equal to 
		\begin{equation*}
			\int_{0}^{1} \big( (1 - w)^{\alpha - 1} - w^{\alpha - 1} \big) w^{-\beta - \alpha/2} dw  = B(1 - \beta - \alpha/2, \alpha) - \frac{1}{\alpha/2 - \beta}.
		\end{equation*}
		Let $w+1 = s^{-1}$. By the Fubini's theorem and Lemma \ref{l:int_as_beta_plus_c}, we get
		\begin{align*}
			&\int_{0}^{\infty} \big( w^{\alpha - 1} - (w + 1)^{\alpha - 1} \big) (w + 1)^{-\beta - \alpha/2} dw = \int_{ 0 }^{ 1 } \bigg( \Big( \frac{s}{1-s} \Big)^{1-\alpha} - s^{1 - \alpha}  \bigg)s^{\beta + \alpha/2 - 2} ds \\ &= \int_{0}^{1} s^{\beta - 1 - \alpha/2} \big( (1-s)^{\alpha - 1} - 1 \big) ds  = \frac{1}{\alpha/2 - \beta} + B(\alpha, \beta - \alpha/2),
		\end{align*}
		which proves (\ref{eq1:f.1}). Next, by Fubini's theorem and again by (\ref{eq:beta_def_2}),
		\begin{align*}
			&\int_{0}^{\infty} \big( (1 + w)^{\alpha - 1} - w^{\alpha - 1} \big) w^{-\beta - \alpha/2} dw = \int_{0}^{\infty} ( \alpha - 1 ) \bigg( \int_{0}^{1} (w+t)^{\alpha -2} dt \bigg) w^{-\beta - \alpha/2} dw \\ &= ( \alpha - 1 ) \int_{0}^{1} t^{\alpha/2 - 1 - \beta} \int_{0}^{\infty} (1 + z)^{\alpha - 2} z^{-\beta - \alpha/2} dz dt \\ &= (\alpha - 1) B(1-\beta - \alpha/2, 1 + \beta - \alpha/2)  \int_{0}^{1} t^{\alpha/2 - 1 - \beta} dt = B(1-\beta - \alpha/2, \beta - \alpha/2),
		\end{align*}
		which proves (\ref{eq2:f.1}). 
	\end{proof}

	\begin{fact} \label{f.4}
		Let $\alpha \in (1,2)$ and $\beta \in (1 - \alpha/2, 1)$. We have
		\begin{equation*}
			\int_{D} (|1 - w|^{\alpha - 1} - 1)w^{-\beta - \alpha/2} dw = \widehat{C}_{\alpha, \beta},
		\end{equation*}
		and
		\begin{equation*}
			\int_{D} ((1 + w)^{\alpha - 1} - 1)w^{-\beta - \alpha/2} dw = \widetilde{C}_{\alpha, \beta}.
		\end{equation*}
	\end{fact}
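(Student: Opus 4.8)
The plan is to imitate the proof of Fact~\ref{f.1}, only now compensating $|1-w|^{\alpha-1}$ and $(1+w)^{\alpha-1}$ by the constant $1$ instead of by $w^{\alpha-1}$. Since $\alpha-1>0$, the term $|1-w|^{\alpha-1}$ already vanishes at $w=1$, so the only singularity that must be tamed is the one at the origin, and subtracting $1$ replaces the $w^{-\beta-\alpha/2}$ behaviour there by the integrable $w^{1-\beta-\alpha/2}$ behaviour.

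For the first identity I would split the integral at $w=1$,
\[
\int_D \big(|1-w|^{\alpha-1}-1\big)w^{-\beta-\alpha/2}\,dw
=\int_0^1\big((1-w)^{\alpha-1}-1\big)w^{-\beta-\alpha/2}\,dw
+\int_1^\infty\big((w-1)^{\alpha-1}-1\big)w^{-\beta-\alpha/2}\,dw .
\]
The first summand is precisely the integral of Lemma~\ref{l:int_as_beta_plus_c} with $a=1-\beta-\alpha/2$ and $b=\alpha$ (note $a\in(-1,0)$ exactly because $1<\beta+\alpha/2<2$ in the present range), hence equals $-\tfrac{1}{1-\beta-\alpha/2}+B(1-\beta-\alpha/2,\alpha)$. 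In the second summand I substitute $w=1/s$, mapping $(1,\infty)$ onto $(0,1)$; collecting powers of $s$ turns it into $\int_0^1\big((1-s)^{\alpha-1}s^{\beta-1-\alpha/2}-s^{\beta+\alpha/2-2}\big)\,ds=B(\beta-\alpha/2,\alpha)-\tfrac{1}{\beta+\alpha/2-1}$. Adding the two contributions and using $-\tfrac{1}{1-\beta-\alpha/2}=\tfrac{1}{\beta+\alpha/2-1}$, the singular terms cancel and what remains is $B(1-\beta-\alpha/2,\alpha)+B(\alpha,\beta-\alpha/2)=\widehat{C}_{\alpha,\beta}$.

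For the second identity I would not split but instead use the integral representation $(1+w)^{\alpha-1}-1=(\alpha-1)\,w\int_0^1(1+wt)^{\alpha-2}\,dt$, mimicking the derivation of \eqref{eq2:f.1}. Since $\alpha\in(1,2)$ the double integrand has a fixed sign, so Tonelli's theorem permits exchanging the order of integration; the substitution $w=z/t$ then separates the variables, producing $t^{\beta+\alpha/2-2}$ times $\int_0^\infty z^{1-\beta-\alpha/2}(1+z)^{\alpha-2}\,dz=B(2-\beta-\alpha/2,\beta-\alpha/2)$, while $\int_0^1 t^{\beta+\alpha/2-2}\,dt=(\beta+\alpha/2-1)^{-1}$. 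It then remains to reduce $\tfrac{\alpha-1}{\beta+\alpha/2-1}\,B(2-\beta-\alpha/2,\beta-\alpha/2)$ to $B(1-\beta-\alpha/2,\beta-\alpha/2)=\widetilde{C}_{\alpha,\beta}$, which follows at once from $\Gamma(2-\beta-\alpha/2)=(1-\beta-\alpha/2)\Gamma(1-\beta-\alpha/2)$ and $\Gamma(2-\alpha)=(1-\alpha)\Gamma(1-\alpha)$, the two resulting linear factors collapsing the prefactor to $1$.

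The step I expect to be the main obstacle is, as throughout Section~\ref{sec:2}, the bookkeeping of convergence: the tail integral $\int_1^\infty$ before the substitution and the Beta integrals $B(\beta-\alpha/2,\alpha)$ and $B(2-\beta-\alpha/2,\beta-\alpha/2)$ are only absolutely convergent for $\beta>\alpha/2$, so one must first verify that the compensated integrands are genuinely integrable on the full parameter range (cancelling the $w^{-\beta-\alpha/2}$ singularity at $0$ against the subtracted $1$, and controlling the behaviour at infinity where needed) or else read the intermediate identities through the analytic continuation of $B$. Once integrability is secured, the remaining work is the routine application of Lemma~\ref{l:int_as_beta_plus_c}, the two substitutions, and the functional equation of $\Gamma$.
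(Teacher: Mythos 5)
Your argument is correct and essentially identical to the paper's: for the first identity you split at $w=1$, apply Lemma \ref{l:int_as_beta_plus_c} on $(0,1)$, and reduce the tail by a change of variables to a Beta integral minus $1/(\beta+\alpha/2-1)$ (the paper shifts $w\mapsto w+1$ instead of inverting, which is equivalent), while for the second identity you use the same Newton--Leibniz/Fubini representation of $(1+w)^{\alpha-1}-1$ that the paper writes as $(\alpha-1)\int_1^{1+w}y^{\alpha-2}\,dy$, i.e.\ your formula after the substitution $y=1+wt$. The convergence caveat you raise at the end is genuine---for $\beta<\alpha/2$ both compensated integrands fail to be absolutely integrable at infinity (subtracting $1$ only repairs the origin), so the identities must be read through the analytic continuation of $B$---but the paper's own proof passes over this point in exactly the same way, so your proposal matches it.
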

	\begin{proof}
		By Lemma \ref{l:int_as_beta_plus_c},
		\begin{align*}
			&\int_{0}^{1} ( |1 - w|^{\alpha - 1} - 1)w^{-\beta - \alpha/2} dw = B(1-\beta - \alpha/2, \alpha) + \frac{1}{\beta + \alpha/2 - 1}.
		\end{align*}
		Next, by (\ref{eq:beta_def_2})
		\begin{align*}
			&\int_{1}^{\infty} ( |w-1|^{\alpha - 1} - 1) w^{-\beta - \alpha/2} dw = \int_{0}^{\infty} ( w^{\alpha - 1} - 1)(w+1)^{-\beta - \alpha/2} dw \\&= \int_{0}^{\infty} w^{\alpha - 1}(w+1)^{-\beta - \alpha/2} dw - \int_{0}^{\infty} (w+1)^{-\beta - \alpha/2} dw \\&= B(\alpha, \beta - \alpha/2) - \frac{1}{\beta + \alpha/2 - 1}.
		\end{align*}
		Hence we get the first equality in the lemma assertion. We also get the second equality by (\ref{eq:beta_def_2}):
		\begin{align*}
			&\int_{D} ( (1 + w)^{\alpha - 1} - 1)w^{-\beta - \alpha/2} dw =  \int_{D} \Big( (1-\alpha) \int_{1}^{1+w} y^{\alpha -2} dy \Big) w^{-\beta - \alpha/2} dw \\&= (1-\alpha) \int_{1}^{\infty} \int_{y-1}^{\infty} w^{-\beta - \alpha/2} y^{\alpha-2} dw dy = (1-\alpha) \int_{1}^{\infty}  \frac{1}{1 -\beta - \alpha/2} (y-1)^{1 - \beta - \alpha/2} y^{\alpha-2} dy \\&= \frac{1-\alpha}{ 1 - \beta - \alpha/2 } \int_{0}^{\infty} y^{1-\beta - \alpha/2} (y+1)^{\alpha-2} dy = \frac{1-\alpha}{1 - \beta - \alpha/2 } B(2 - \beta - \alpha/2, \beta - \alpha/2). 
		\end{align*}
	\end{proof}
	The last fact concerns the case $\alpha=1$.
	
	\begin{fact} \label{f.5}
		Let $\beta \in (0,1/2)$. We have
		\begin{equation*}
			\int_{0}^{\infty} y^{-\beta - 1/2} \ln(|1-y|/y)dy = \frac{ \pi \sin(\pi \beta ) }{ (1/2 - \beta)\cos( \pi \beta ) },
		\end{equation*}
		and 
		\begin{equation*}
			\int_{0}^{\infty} y^{-\beta - 1/2} \ln(|1+y|/y)dy = \frac{\pi}{ (1/2 - \beta) \cos(\pi \beta) }.
		\end{equation*}
	\end{fact}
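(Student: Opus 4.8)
\noindent\emph{Proof proposal.} The plan is to obtain both identities by one integration by parts, using the antiderivative $v(y)=y^{1/2-\beta}/(1/2-\beta)$ of $y^{-\beta-1/2}$ (well defined since $\beta\in(0,1/2)$), and then to recognise the remaining integrals as standard Euler integrals. I note that these two formulas are the degenerate $\alpha\to1$ cases of Fact~\ref{f.1}: dividing \eqref{eq1:f.1} and \eqref{eq2:f.1} by $\alpha-1$ and letting $\alpha\to1$ (along $\alpha\in(2\beta,2-2\beta)\setminus\{1\}$, where those identities hold) sends the left-hand sides to the two integrals in the present statement, since $(|1-w|^{\alpha-1}-w^{\alpha-1})/(\alpha-1)\to\ln(|1-w|/w)$ and $((1+w)^{\alpha-1}-w^{\alpha-1})/(\alpha-1)\to\ln((1+w)/w)$, while the right-hand sides become $\frac{d}{d\alpha}\widehat C_{\alpha,\beta}\big|_{\alpha=1}$ and $\frac{d}{d\alpha}\widetilde C_{\alpha,\beta}\big|_{\alpha=1}$ (note $\widehat C_{1,\beta}=\widetilde C_{1,\beta}=0$), which one computes from the digamma and reflection formulas. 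This alternative route needs a dominated-convergence argument with a dominating function integrable only for $\alpha$ near $1$, so I prefer the direct computation below.

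For the second integral, integration by parts gives
\begin{equation*}
\int_0^\infty y^{-\beta-1/2}\ln\frac{1+y}{y}\,dy=\Big[v(y)\ln\tfrac{1+y}{y}\Big]_0^\infty+\frac{1}{1/2-\beta}\int_0^\infty\frac{y^{-1/2-\beta}}{1+y}\,dy.
\end{equation*}
Because $\beta\in(0,1/2)$ the boundary term vanishes: at $0$ since $y^{1/2-\beta}|\ln y|\to0$, and at $\infty$ since $y^{1/2-\beta}\ln(1+1/y)\sim y^{-1/2-\beta}\to0$. By \eqref{eq:beta_def_2} the remaining integral equals $B(1/2-\beta,1/2+\beta)$, and the reflection formula $\Gamma(s)\Gamma(1-s)=\pi/\sin(\pi s)$ with $s=1/2-\beta$ gives $B(1/2-\beta,1/2+\beta)=\pi/\sin(\pi(1/2-\beta))=\pi/\cos(\pi\beta)$, so the integral equals $\pi/\big((1/2-\beta)\cos(\pi\beta)\big)$, as claimed.

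For the first integral the same integration by parts works on each of $(0,1)$ and $(1,\infty)$, but the logarithmic singularity at $y=1$ requires a regularisation: one integrates by parts on $(0,1-\varepsilon)$ and on $(1+\varepsilon,\infty)$ and verifies that the two boundary contributions at $1-\varepsilon$ and $1+\varepsilon$ add up to a quantity of order $O(\varepsilon\ln\varepsilon)$, hence vanishing as $\varepsilon\to0$. What remains is
\begin{equation*}
\int_0^\infty y^{-\beta-1/2}\ln\frac{|1-y|}{y}\,dy=\frac{1}{1/2-\beta}\;\mathrm{p.v.}\!\int_0^\infty\frac{y^{-1/2-\beta}}{1-y}\,dy.
\end{equation*}
The substitution $y\mapsto1/y$ applied to the part over $(1,\infty)$ turns this principal value into the convergent integral $\int_0^1\big(y^{-1/2-\beta}-y^{\beta-1/2}\big)(1-y)^{-1}\,dy=\psi(1/2+\beta)-\psi(1/2-\beta)$, where $\psi=\Gamma'/\Gamma$ is the digamma function. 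By the reflection formula $\psi(1-x)-\psi(x)=\pi\cot(\pi x)$ with $x=1/2-\beta$ this equals $\pi\cot(\pi(1/2-\beta))=\pi\tan(\pi\beta)=\pi\sin(\pi\beta)/\cos(\pi\beta)$, which after division by $1/2-\beta$ is the asserted value.

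The genuinely delicate point is the first identity near $y=1$: both the boundary term coming from the integration by parts and the integral $\int_0^\infty y^{-1/2-\beta}(1-y)^{-1}\,dy$ diverge individually there, so the computation must be arranged --- via the symmetric cut-off $(0,1-\varepsilon)\cup(1+\varepsilon,\infty)$ --- so that the vanishing of the boundary term and the emergence of the principal value happen together; identifying that principal value with $\pi\tan(\pi\beta)$ (equivalently, via a short contour integration) is the final ingredient. The $\ln((1+y)/y)$ integral, by contrast, is entirely routine.
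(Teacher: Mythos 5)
Your proof is correct, but it takes a genuinely different route from the paper. The paper's own proof is a one-line reduction: it substitutes $x=1/y$, which turns the two integrals into $\int_0^\infty x^{(\beta-1/2)-1}\ln|1-x|\,dx$ and $\int_0^\infty x^{(\beta-1/2)-1}\ln(1+x)\,dx$, and then cites the tabulated formulas 4.293.7 and 4.293.10 of Gradshteyn--Ryzhik. You instead give a self-contained computation: integration by parts against the antiderivative $y^{1/2-\beta}/(1/2-\beta)$ reduces the second integral to $B(1/2-\beta,1/2+\beta)=\pi/\cos(\pi\beta)$ via \eqref{eq:beta_def_2} and the reflection formula, and reduces the first to the principal value $\mathrm{p.v.}\int_0^\infty y^{-1/2-\beta}(1-y)^{-1}dy$, which after folding $(1,\infty)$ onto $(0,1)$ by $y\mapsto 1/y$ becomes $\int_0^1 (y^{-1/2-\beta}-y^{\beta-1/2})(1-y)^{-1}dy=\psi(1/2+\beta)-\psi(1/2-\beta)=\pi\tan(\pi\beta)$. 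I checked the delicate points: the two boundary terms at $1\mp\varepsilon$ indeed combine to $\frac{1}{1/2-\beta}\bigl((1-\varepsilon)^{1/2-\beta}-(1+\varepsilon)^{1/2-\beta}\bigr)\ln\varepsilon+O(\varepsilon)=O(\varepsilon\ln\varepsilon)$, and the mismatch between the cut-offs $1-\varepsilon$ and $1/(1+\varepsilon)$ contributes only $O(\varepsilon)$ to the principal value, so the argument is sound. What your approach buys is independence from integral tables (everything reduces to the Beta integral already recorded in the paper plus the digamma reflection formula), at the cost of the principal-value bookkeeping near $y=1$; the paper's version is shorter but opaque without the reference. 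Your opening remark about recovering the statement as the $\alpha\to1$ limit of Fact~\ref{f.1} is a nice observation but, as you note, would itself require a domination argument, so leaving it as an aside is the right call.
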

	
	\begin{proof}
		Substituting $x=1/y$ and applying \cite[Equation 4.293.7]{MR2360010}, we obtain
		\begin{equation*}
			\int_{0}^{\infty} y^{-\beta - 1/2} \ln(|1-y|/y)dy  = \int_{0}^{\infty} x^{ (\beta - 1/2) - 1} \ln(|1-x|)dx = \frac{ \pi \sin(\pi \beta ) }{ (1/2 - \beta)\cos( \pi \beta ) }.
		\end{equation*} 
		By making the same substitution and by \cite[Equation 4.293.10]{MR2360010}, we get 
		\begin{equation*}
			\int_{0}^{\infty} y^{-\beta - 1/2} \ln(|1+y|/y)dy  = \int_{0}^{\infty} x^{ (\beta - 1/2) - 1} \ln(|1+x|)dx = \frac{\pi}{ (1/2 - \beta) \cos(\pi \beta) }.
		\end{equation*}
	\end{proof}
	
	\subsection{Green function and Poisson Kernel}
	We recall some facts from potential theory of stable processes, see e.g. \cite{MR1825645}. 
	For $\alpha<1$ the process $X_t$ is transient and its potential kernel is given by
	\begin{align*}
		K_\alpha(x) = \int_0^\infty p_t(x) dt.
	\end{align*}
	For $\alpha \ge 1$, $X_t$ is recurrent and we consider the compensated kernels
	\begin{align*}
		K_\alpha(x) = \int_0^\infty (p(t,x) - p(t,x_0))dt,
	\end{align*}
	where $x_0 =0$ for $\alpha>1$ and $x_0=1$ for $\alpha=1$. It turns out that $K_\alpha$ is radial and  
	\begin{align*}
		K_\alpha(x)=
		\begin{cases}
			C_\alpha |x|^{\alpha-1} & \mbox{for\; } \alpha\not=1,\\
			-\frac{1}{\pi} \ln(|x|) & \mbox{for\; } \alpha=1,
		\end{cases}
	\end{align*}
	where 
	\begin{align}\label{C_alpha}
		C_\alpha = \frac{1}{2\Gamma(\alpha) \cos(\pi\alpha/2)}.
	\end{align}
	We note that $C_{-\alpha} = \pA_{\alpha}.$
	Recall that $D = (0, \infty)$. 
	\begin{definition}
		We define Green's function of the set $D$
		\begin{equation}\label{def:GreenF}
			G_D(x,y) = \int_{0}^{\infty} p_D(t,x,y)dt,\qquad ~x,y \in D.
		\end{equation}
	\end{definition}
	\noindent $G_D(x,y)$ is symmetric, continuous on $D \times D$ with values in $[0,\infty].$ If at least one of the arguments belongs to $D^c$, then $G_D(x,y) = 0 $. Moreover for $x\neq y \in D$, $G_D(x,y) < \infty$ (see \cite{MR2722789}). By integrating the Hunt formula we obtain
	\begin{align}\label{eq:GreenHunt}
		G_D(x,y) = K_\alpha(x,y) - \E^xK_\alpha(X_{\tau_D},y) = K_\alpha(x,y) - \E^yK_\alpha(X_{\tau_D},x), 
	\end{align}
	when the last equality follows from the symmetry of $G_D$.

	Recall that $\nu(x) = \pA_\alpha|x|^{-1-\alpha}$ is the density of the L\'evy measure.	If $A$ is any Borel subset of $(\overline{D})^c$ and $0\le a< b \le \infty$, then we have the following Ikeda-Watanabe formula (see \cite{MR142153})
	\begin{equation*} 
		\mathbb{P}^{x}(\tau_D \in (a,b), X_{\tau_{D}} \in A) = \int_{A} \int _{D} \int_{a}^{b} p_D(t,x,y) \nu(z-y)dtdydz,
	\end{equation*} 
	for the joint distribution of the random vector $(\tau_D, X_{\tau_D})$.
	Taking $a =0$ and $b= \infty$, we get
	\begin{equation} \label{Ikeda-Wantanbe}
		\mathbb{P}^{x}(X_{\tau_{D}} \in A) = \int_{A} \int _{D} G_D(x,y) \nu(z-y)dydz.
	\end{equation}
	Hence, the distribution of $X_{\tau_D}$ is absolutely continuous with respect to the Lebesgue measure. Equation  (\ref{Ikeda-Wantanbe}) is called also Ikeda-Watanabe formula. The density of the measure $\mu(A) = \mathbb{P}^{x}(X_{\tau_{B}} \in A) $ with respect to the Lebesgue'a measure is called Poisson kernel and we denote it by $P_D(x,y)$. For $y\in D$, $P_D(x,y) = 0.$ Hence, 
	\begin{equation*} 
		P_D(x,z) = \int_{D}  G_D(x,y) \nu(z-y)dy, \qquad x\in D,\, z \in (\overline{D})^c.
	\end{equation*}
	
	\begin{lemma} \label{l:nu}
		For $\alpha \in (0,2)$ and $\beta \in (0,1)$,
		\begin{equation*} 
			\int_{D} \nu(1+z) z^{-\beta  + \alpha/2}dz = \overline{C}_{\alpha,\beta}, 
		\end{equation*}
		where 
		\begin{equation*} 
			\overline{C}_{\alpha, \beta} = \mathcal{A}_\alpha B(1-\beta + \alpha/2, \alpha/2 + \beta).
		\end{equation*}
	\end{lemma}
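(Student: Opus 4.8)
The plan is to reduce the integral $\int_D \nu(1+z)\,z^{-\beta+\alpha/2}\,dz$ directly to the integral representation of the beta function in \eqref{eq:beta_def_2}. Recall $\nu(y)=\pA_\alpha|y|^{-1-\alpha}$, and since $z>0$ we have $1+z>0$, so $\nu(1+z)=\pA_\alpha(1+z)^{-1-\alpha}$. Therefore
\begin{equation*}
	\int_{D}\nu(1+z)\,z^{-\beta+\alpha/2}\,dz=\pA_\alpha\int_0^\infty (1+z)^{-1-\alpha}\,z^{-\beta+\alpha/2}\,dz=\pA_\alpha\int_0^\infty \frac{z^{(\alpha/2+\beta)-\beta+\alpha/2-\beta}}{(1+z)^{\cdots}}\,dz,
\end{equation*}
and I simply match exponents against the second formula in \eqref{eq:beta_def_2}, namely $B(x,y)=\int_0^\infty z^{x-1}(1+z)^{-x-y}\,dz$. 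Here $x-1=-\beta+\alpha/2$, i.e.\ $x=1-\beta+\alpha/2$, and $x+y=1+\alpha$, i.e.\ $y=1+\alpha-(1-\beta+\alpha/2)=\alpha/2+\beta$. Thus the integral equals $\pA_\alpha B(1-\beta+\alpha/2,\,\alpha/2+\beta)=\lC_{\alpha,\beta}$, as claimed.

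The only genuine point to check is that the exponents $x=1-\beta+\alpha/2$ and $y=\alpha/2+\beta$ are both strictly positive, so that the representation \eqref{eq:beta_def_2} actually applies (equivalently, that the integral converges at both $0$ and $\infty$). For $\beta\in(0,1)$ and $\alpha\in(0,2)$ we have $\alpha/2+\beta>0$ trivially, and $1-\beta+\alpha/2>1-1+0=0$ since $\beta<1$ and $\alpha/2>0$; so both conditions hold throughout the stated range. Convergence at $0$ is governed by $z^{-\beta+\alpha/2}$, which is integrable near $0$ precisely because $-\beta+\alpha/2>-1$, i.e.\ $x>0$; convergence at $\infty$ is governed by $z^{-\beta+\alpha/2-1-\alpha}=z^{-1-\alpha/2-\beta}$, integrable at $\infty$ since $\alpha/2+\beta>0$, i.e.\ $y>0$.

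There is essentially no obstacle here: the lemma is a bookkeeping computation, recorded separately because the constant $\lC_{\alpha,\beta}$ and this particular moment of the Lévy density will be invoked repeatedly in Section~\ref{sec:3} (in the compensation-of-kernels arguments and in evaluating $\E^xK_\alpha(X_{\tau_D},y)$-type expressions via the Ikeda--Watanabe formula). If one prefers, the same identity follows from the first formula in \eqref{eq:beta_def_2} after the substitution $z=s/(1-s)$, which turns $\int_0^\infty z^{x-1}(1+z)^{-x-y}\,dz$ into $\int_0^1 s^{x-1}(1-s)^{y-1}\,ds$; I would, however, just cite \eqref{eq:beta_def_2} directly to keep the proof to one line.
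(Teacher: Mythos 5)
Your proof is correct and is essentially the paper's own one-line argument: insert $\nu(1+z)=\pA_\alpha(1+z)^{-1-\alpha}$ and match the exponents against the second integral representation of the beta function in \eqref{eq:beta_def_2}. The added check that $1-\beta+\alpha/2>0$ and $\alpha/2+\beta>0$ is a harmless (and correct) extra; only the garbled intermediate display with the unmatched exponents should be cleaned up before use.
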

	\begin{proof}
		We have
		\begin{align*}
			\int_D \nu(1+z) z^{-\beta  + \alpha/2} dz &= \mathcal{A}_\alpha \int_0^{\infty} (1+z)^{-1-\alpha}z^{-\beta + \alpha/2} dz = \mathcal{A}_\alpha B(1-\beta + \alpha/2, \alpha/2 + \beta). 	
		\end{align*}
	\end{proof}

	
	\section{Proofs of Theorems \ref{tw:qbeta} and \ref{thm:1}}\label{sec:3}
	

	As it was mentioned in the Introduction, we cannot directly follow the ideas from \cite{MR3460023} to calculate the constant $\kappa_\beta$. Therefore, we first prove the auxiliary result stated in the Theorem \ref{thm:1}.
	
	\subsection{Proof of Theorem \ref{thm:1}}

	First, we will present the general idea of the proof. By the definition of the Green function \eqref{def:GreenF}, we need to prove
\begin{equation} \label{thm:G_D}
		\int_D G_D(x,y) y^{-\beta - \alpha/2}dy = \kappa^{-1}_{ \beta } x^{\alpha/2 - \beta}.
	\end{equation}
	One may try to use here the explicit formula for $G_D(x,y)$ (see e.g. \cite{MR3888401}), however the resulting double integral seems difficult to calculate. Our method proposed below can be also used in the cases where the formula for the Green's function is unknown.  
	In the equation \eqref{thm:G_D} for $G_D(x,y)$ we substitute \eqref{eq:GreenHunt} and in result we obtain that 
	\begin{align*}
		\int_D G_D(x,y) y^{-\alpha/2 - \beta} dy &= \int_D K_\alpha(x,y) y^{-\alpha/2 - \beta} dy + \int_D \mathbb{E}^{x}[K_\alpha(X_{\tau_{D}},y)] y^{-\alpha/2 - \beta} dy \\
		&=C_1 x^{\alpha/2 - \beta} - C_2 \int_D G_D(x,y)y^{-\beta - \alpha/2}dy,
	\end{align*}
	where the constants $C_1, C_2$ can be computed explicitly. Then,  $\kappa_\beta = (1+C_2)/C_1$. 

	However, following this approach we encounter several problems with the convergence of the relevant integrals. Therefore, depending on the range of $\alpha$ and $\beta$, we use appropriate compensation of the potential $K_\alpha$. Consequently, in the proof of the Theorem \ref{thm:1} we will consider separately the cases $\alpha<1$, $\alpha>1$, $\alpha=1$ and the different ranges for $\beta$.

	Note that to prove Theorem \ref{thm:1}, by scaling property of $p_D$ it suffice to show (\ref{eq:1}) only for $x=1$. 
	However, before we pass to the proof of Theorem \ref{thm:1}, we prove a few auxiliary lemmas.

	\begin{lemma} \label{l:op_int_pD_y}
		Let $\alpha \in (0,2)$ and $-\alpha < \gamma < 1 + \alpha/2$. Then,
		\begin{align*}
			\int_D p_D(t,1,y) y^{-\gamma}dy &\approx 1 && \mbox{for}\;\; t < 2^{-\alpha}, &&\\
			\int_D p_D(t,1,y) y^{-\gamma}dy &\approx t^{ -1/2 - \gamma/\alpha }&& \mbox{for}\;\; t>2.&&
		\end{align*} 
	\end{lemma}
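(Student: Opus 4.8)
The plan is to estimate $\int_D p_D(t,1,y) y^{-\gamma}\,dy$ by invoking the sharp two-sided bound \eqref{eq:oppt} for $p_D$ together with the well-known heat-kernel estimate $p(t,x,y) \approx t^{-1/\alpha} \wedge t|x-y|^{-1-\alpha}$, and then splitting the $y$-integral into the regions where $y$ is small, comparable to $1$, or large, and where $t^{1/\alpha}$ is small or large compared to the relevant distances. Concretely, for the base point $x=1$, \eqref{eq:oppt} gives
\begin{equation*}
	p_D(t,1,y) \approx p(t,1,y)\Bigl(1 \wedge t^{-1/2}\Bigr)\Bigl(1 \wedge \frac{y^{\alpha/2}}{t^{1/2}}\Bigr).
\end{equation*}
For $t<2^{-\alpha}$ the first factor is $\approx 1$, so one is left to control $\int_D p(t,1,y)(1\wedge y^{\alpha/2}t^{-1/2}) y^{-\gamma}\,dy$. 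Here $t$ is small, hence $p(t,1,\cdot)$ concentrates near $y=1$: the contribution of $y$ away from $1$ (say $y\in(0,1/2)\cup(2,\infty)$) is bounded by $C t \int y^{-\gamma}(1+|y-1|)^{-1-\alpha}\,dy$, which is $O(t)$ and hence $\lesssim 1$ using $\gamma<1+\alpha/2$ near $0$ (in fact $\gamma<1$ suffices there since $y^{\alpha/2}t^{-1/2}\le 1$ makes the integrand $\approx t^{1/2} y^{\alpha/2-\gamma}$) and the decay $|y-1|^{-1-\alpha}$ at infinity; for $y$ near $1$ both cut-off factors are $\approx 1$ and $\int_{1/2}^2 p(t,1,y)\,dy \approx 1$ by the lower bound $\mathbb{P}^1(\tau_D>t)\approx 1$ for small $t$, or directly from $\int_\R p_t = 1$ minus a small tail. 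This yields the two-sided bound $\approx 1$.

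For $t>2$ the first factor is $\approx t^{-1/2}$, so the integral is $\approx t^{-1/2}\int_D p(t,1,y)(1\wedge y^{\alpha/2}t^{-1/2})y^{-\gamma}\,dy$. Now $t$ is large, so split at $y \approx t^{1/\alpha}$: for $y\lesssim t^{1/\alpha}$ we have $p(t,1,y)\approx t^{-1/\alpha}$ and $1\wedge y^{\alpha/2}t^{-1/2}$ is $y^{\alpha/2}t^{-1/2}$ when $y\lesssim t^{1/\alpha}$, giving a contribution $\approx t^{-1/2} t^{-1/\alpha} t^{-1/2}\int_0^{t^{1/\alpha}} y^{\alpha/2-\gamma}\,dy \approx t^{-1/2}t^{-1/\alpha}t^{-1/2}\,t^{(\alpha/2-\gamma+1)/\alpha} = t^{-1/2-\gamma/\alpha}$, where convergence of $\int_0 y^{\alpha/2-\gamma}$ uses $\gamma<1+\alpha/2$; for $y\gtrsim t^{1/\alpha}$ the cut-off is $\approx 1$ and $p(t,1,y)\approx t|y-1|^{-1-\alpha}\approx t y^{-1-\alpha}$, so the contribution is $\approx t^{-1/2}\cdot t\int_{t^{1/\alpha}}^\infty y^{-1-\alpha-\gamma}\,dy \approx t^{-1/2}\cdot t\cdot t^{(-\alpha-\gamma)/\alpha} = t^{-1/2-\gamma/\alpha}$, matching. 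Summing the two regions gives $\approx t^{-1/2-\gamma/\alpha}$.

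The main obstacle is bookkeeping the many sub-cases of the $\min$'s in the kernel bounds so that both the upper and lower estimates come out with the same exponent; in particular one must check that the integrals near $y=0$ converge precisely under $\gamma<1+\alpha/2$ (the factor $1\wedge y^{\alpha/2}t^{-1/2}$ contributes the extra $y^{\alpha/2}$ that shifts the usual threshold $\gamma<1$ up to $\gamma<1+\alpha/2$) and that the lower bound near $y\approx 1$ (for small $t$) and near $y\approx t^{1/\alpha}$ (for large $t$) is genuinely of the claimed order, not merely an upper bound. The condition $\gamma>-\alpha$ is used only to keep certain remainder terms (e.g.\ the integral of $p(t,1,y)y^{-\gamma}$ over $y$ large when $\gamma$ is very negative) finite; it is not binding in the regimes that produce the stated asymptotics. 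Once the estimates are assembled region by region, the two displayed equivalences follow.
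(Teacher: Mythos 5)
Your proposal is correct and follows essentially the same route as the paper: both start from the sharp bound \eqref{eq:oppt}, split the $y$-integral at $1/2$ and $3/2$ (resp.\ at $t^{1/\alpha}$) according to which branch of each minimum is active, and obtain the matching upper and lower bounds, with $\gamma<1+\alpha/2$ governing integrability at $0$ via the extra factor $y^{\alpha/2}$ and $\gamma>-\alpha$ governing the tail. The only cosmetic difference is the small-$t$ lower bound, where you use $\int_{1/2}^{2}p(t,1,y)\,dy\approx 1$ while the paper integrates $t^{-1/\alpha}$ over $(1-t^{1/\alpha},1+t^{1/\alpha})$; both are valid.
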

	\begin{proof}
		By (\ref{eq:oppt}),  
\begin{equation*}
	p_D(t,1,y)y^{-\gamma} \approx \bigg( 1 \wedge \frac{1}{ t^{1/2} } \bigg) \bigg( 1 \wedge \frac{ y^{\alpha/2} }{ t^{1/2} } \bigg) \bigg( t^{-1/\alpha} \wedge \frac{ t }{ |y - 1|^{1 + \alpha} } \bigg)y^{-\gamma}.
\end{equation*}
For $t < 2^{-\alpha}$ we have 
\begin{align*}
	\int_D p_D(t,1,y) y^{-\gamma}dy & \le c\int_{ 0 }^{ 1/2 } y^{\alpha/2 - \gamma } t^{1/2}  dy + c\int_{1/2}^{3/2}  p(t,x,y) dy + c\int_{ 3/2 }^{ \infty }  t  y^{ -1 -\alpha -\gamma} dy \le c_1, \\
	\int_D p_D(t,1,y) y^{-\gamma}dy &\ge  c_2 \int_{ 1 - t^{ 1/\alpha } }^{ 1 + t^{ 1/\alpha } } t^{ -1/\alpha } dy = 2c_2.			
\end{align*}
For $t > 2$ we have 
\begin{align*}
	\int_D p_D(t,1,y) y^{-\gamma}dy &\approx \int_{ 0 }^{ t^{1/\alpha} }  t^{- 1 - 1/\alpha  } y^{\alpha/2 - \gamma } dy + \int_{ t^{1/\alpha} }^{ \infty } t^{-1/2} \frac{ t }{ y^{ 1 + \alpha } } y^{ -\gamma } dy \\ &= \frac{ t^{- \gamma/\alpha -1/2 } }{ \alpha/2 - \gamma + 1 } + \frac{ t^{ -\gamma/\alpha - 1/2 } }{ \gamma + \alpha } \approx t^{ -1/2 - \gamma/\alpha }.
\end{align*}
\end{proof}

	\begin{corollary} \label{c:op_int_pD_y}
		For $\alpha \in (0,2)$ and $\beta \in (0,1)$ the integral 
		\begin{equation*}
			\int_{0}^{\infty} \int_{D} p_D(t,1,y) y^{-\alpha/2 - \beta} dydt
		\end{equation*}
		is convergent.
	\end{corollary}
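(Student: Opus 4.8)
The plan is to read off the corollary directly from Lemma \ref{l:op_int_pD_y} with the choice $\gamma = \alpha/2 + \beta$. First I would verify that this $\gamma$ falls in the admissible range of the lemma: since $\alpha>0$ and $\beta\in(0,1)$ we have
\[
-\alpha < 0 < \tfrac{\alpha}{2} + \beta < \tfrac{\alpha}{2} + 1,
\]
so indeed $-\alpha < \gamma < 1 + \alpha/2$ and Lemma \ref{l:op_int_pD_y} is applicable.

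Next I would split the time integral as $\int_0^\infty = \int_0^{2^{-\alpha}} + \int_{2^{-\alpha}}^{2} + \int_2^\infty$ and bound the three pieces. On $(0,2^{-\alpha})$ the inner integral $\int_D p_D(t,1,y)\,y^{-\gamma}\,dy$ is bounded above by a constant by the first estimate in Lemma \ref{l:op_int_pD_y}, so this piece is at most $c\,2^{-\alpha}<\infty$. On $(2,\infty)$ the same lemma gives $\int_D p_D(t,1,y)\,y^{-\gamma}\,dy \approx t^{-1/2-\gamma/\alpha}$, and with $\gamma=\alpha/2+\beta$ the exponent equals $-1-\beta/\alpha < -1$; hence $\int_2^\infty t^{-1-\beta/\alpha}\,dt<\infty$. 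For the compact range $[2^{-\alpha},2]$ I would bound $p_D(t,1,y)\,y^{-\gamma}$ by an integrable majorant uniformly in $t$ using \eqref{eq:oppt}: near $y=0$ the factor $1\wedge y^{\alpha/2}t^{-1/2}$ yields $y^{\alpha/2-\gamma}=y^{-\beta}$, integrable at $0$ since $\beta<1$; near $y=1$ the integral of $p(t,1,y)$ is bounded by $1$; and near $y=\infty$ the bound $t\,y^{-1-\alpha-\gamma}$ is integrable since $\gamma>-\alpha$. Summing the three finite contributions proves convergence.

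The only point that genuinely matters is the check that the large-$t$ decay exponent is strictly below $-1$, which is precisely where $\beta>0$ enters; all the remaining bounds are routine and follow from the estimate \eqref{eq:oppt} already recorded. I therefore do not expect any real obstacle here — the corollary is essentially a bookkeeping consequence of Lemma \ref{l:op_int_pD_y}.
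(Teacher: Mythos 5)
Your proposal is correct and follows essentially the same route as the paper: apply Lemma \ref{l:op_int_pD_y} with $\gamma=\alpha/2+\beta$, split the time integral at $2^{-\alpha}$ and $2$, use the large-$t$ decay $t^{-1-\beta/\alpha}$ (integrable because $\beta>0$), and handle the middle range by the pointwise bound \eqref{eq:oppt} giving $y^{-\beta}$ near $0$ and $y^{-1-\alpha-\gamma}$ near $\infty$. No gaps; nothing further needed.
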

	\begin{proof}
		By Lemma \ref{l:op_int_pD_y} we have 
		\begin{align*}
			\int_{0}^{\infty} \int_{D} p_D(t,1,y) y^{-\alpha/2 - \beta} dydt &\leq c_1 \int_{0}^{ 2^{-\alpha} } dt + \int_{ 2^{-\alpha} }^{2}  \int_{0}^{\infty}  p_D(t,x,y)y^{-\alpha/2 - \beta} dydt \\ &+  c_2\int_{2}^{\infty} t^{-1 - \beta/\alpha} dt \\&\leq c + c_3 \int_{0}^{2} y^{-\beta} dy + c_4 \int_{2}^{\infty} \frac{ y^{-\alpha/2 -\beta} }{ |y-1|^{1+\alpha} } dy < \infty.
		\end{align*}
	\end{proof}
	We also note that by (\ref{eq:prob_tau_D}) and Lemma \ref{l:op_int_pD_y} with $\gamma = 0$ we get 
	\begin{equation}\label{eq:lim_prob_tau_D}
		\lim_{t \rightarrow \infty} \mathbb{P}^x(\tau_D > t) = 0, \qquad x>0.
	\end{equation}

	The integral in Corollary \ref{c:op_int_pD_y} is symmetric with respect to argument $\beta$. This is provided by the following lemma.
	\begin{lemma} \label{l:int_pD_symmetry}
		For $x >0$, $\alpha \in (0,2)$ and $\beta \in (0, 1)$ we have
		\begin{equation} \label{eq:int_pD_symmetry}
			\int_{0}^{\infty} \int_{D} p_D(t,x,y) y^{-\beta-\alpha/2} dydt = x^{\alpha/2 - \beta} \int_{0}^{\infty} \int_{D} p_D(t,1,y) y^{-(1 - \beta)-\alpha/2} dydt. 
		\end{equation}
	\end{lemma}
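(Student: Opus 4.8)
The plan is to derive \eqref{eq:int_pD_symmetry} from the scaling property \eqref{eq:sca} and the symmetry of $p_D$ alone, by a sequence of changes of variables. Throughout, all integrands are nonnegative, so Tonelli's theorem justifies every interchange of integration and every substitution below; moreover, by Corollary \ref{c:op_int_pD_y} both sides are finite for every $\beta \in (0,1)$, hence also for $1-\beta \in (0,1)$. Write
\begin{equation*}
I(x,\beta) := \int_0^\infty \int_D p_D(t,x,y)\, y^{-\beta-\alpha/2}\, dy\, dt .
\end{equation*}
From \eqref{eq:sca} (together with $p_D(1,a,b)=p_D(1,b,a)$) one reads off the homogeneity $p_D(\lambda^\alpha t, \lambda x, \lambda y) = \lambda^{-1} p_D(t,x,y)$ for all $\lambda, t>0$ and $x,y\in D$. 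The proof then has two steps: first reduce to $x=1$, then establish the reflection $I(1,\beta) = I(1,1-\beta)$.

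\emph{Step 1.} In $I(x,\beta)$ substitute $t = x^\alpha \tau$ and $y = xz$. Using the homogeneity with $\lambda = x$ we obtain $p_D(x^\alpha\tau, x, xz) = x^{-1} p_D(\tau, 1, z)$; the Jacobian contributes a factor $x^{\alpha}\cdot x$, while $(xz)^{-\beta-\alpha/2} = x^{-\beta-\alpha/2} z^{-\beta-\alpha/2}$. Collecting powers of $x$ gives
\begin{equation*}
I(x,\beta) = x^{\alpha/2-\beta}\, I(1,\beta).
\end{equation*}

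\emph{Step 2.} By symmetry, $p_D(t,1,y) = p_D(t,y,1)$, and the homogeneity applied with $\lambda = 1/y$ yields $p_D(t,y,1) = y^{-1} p_D(t y^{-\alpha}, 1, y^{-1})$. Substituting $s = t y^{-\alpha}$ for fixed $y$ turns $I(1,\beta)$ into $\int_0^\infty\int_D p_D(s,1,y^{-1})\, y^{\alpha/2-\beta-1}\, dy\, ds$, and the further substitution $w = 1/y$ gives $\int_0^\infty\int_D p_D(s,1,w)\, w^{\beta-\alpha/2-1}\, dw\, ds$. Since $\beta - \alpha/2 - 1 = -(1-\beta) - \alpha/2$, this is precisely $I(1,1-\beta)$. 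Combining with Step 1,
\begin{equation*}
I(x,\beta) = x^{\alpha/2-\beta} I(1,\beta) = x^{\alpha/2-\beta} I(1,1-\beta),
\end{equation*}
which is exactly \eqref{eq:int_pD_symmetry}. The only thing to watch is the bookkeeping of exponents across the three substitutions; there is no genuine analytic obstacle, since nonnegativity removes any integrability concern and Corollary \ref{c:op_int_pD_y} supplies the finiteness of both sides.
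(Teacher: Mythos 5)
Your proof is correct and follows essentially the same route as the paper: both arguments rest solely on the scaling/homogeneity of $p_D$, its symmetry, and the changes of variables $s=ty^{-\alpha}$ and an inversion in the space variable, with nonnegativity (Tonelli) handling all interchanges. The only difference is organizational — you split the computation into a reduction to $x=1$ followed by the reflection $I(1,\beta)=I(1,1-\beta)$, whereas the paper performs the same substitutions in a single chain via $z=x/y$.
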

	\begin{proof}
		Since $1-\beta \in (0, 1)$, by Corollary \ref{c:op_int_pD_y} integrals in the formula (\ref{eq:int_pD_symmetry}) are convergent.  Using scaling property of $p_D(t,x,y)$ and letting $s = ty^{-\alpha}$, and then putting $z = x/y$ we have 
		\begin{align*}
			\int_{0}^{\infty} \int_{D} p_D(t,x,y) y^{-\beta-\alpha/2} dydt &= \int_{0}^{\infty} \int_D  y^{-1}p_D(ty^{-\alpha},x/y,1) y^{-\beta - \alpha/2} dy dt \\ &= \int_{0}^{\infty}\int_D p_D(s,x/y,1)y^{-\beta + \alpha/2 -1} dy ds \\ &= x^{ \alpha/2-\beta} \int_{0}^{\infty} \int_D p_D(s, 1, z) z^{-(1-\beta) - \alpha/2} dzds.
		\end{align*}
	\end{proof}

	\smallskip At the beginning, we consider the case $\beta = \alpha/2.$
	\begin{lemma} \label{l:beta=alpha/2}
		For $\alpha \in (0,2)$
		\begin{equation*}
			\int_{ 0 }^{ \infty } \int_{ D } p_D(t,x,y) y^{-\alpha} dy dt =  \frac{ \pi }{\Gamma ( \alpha ) \sin(\pi \alpha/2) }.
		\end{equation*}
	\end{lemma}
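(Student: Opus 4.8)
The plan is to read off the value directly from the Ikeda--Watanabe formula, avoiding any explicit expression for $G_D$. The key observation is that integrating the jump kernel over the exterior of $D$ reproduces exactly the weight $y^{-\alpha}$: for $y\in D$, writing $|z-y|=y-z$ for $z<0<y$ and substituting $u=y-z$,
\[
\int_{(\overline{D})^c}\nu(z-y)\,dz=\mathcal A_\alpha\int_{-\infty}^{0}(y-z)^{-1-\alpha}\,dz=\mathcal A_\alpha\int_{y}^{\infty}u^{-1-\alpha}\,du=\frac{\mathcal A_\alpha}{\alpha}\,y^{-\alpha}.
\]
This is also the computation showing that $\mathbf 1_D$ is, up to the constant $\alpha/\mathcal A_\alpha$, the Green potential of $y^{-\alpha}$, since $-\Delta^{\alpha/2}\mathbf 1_D(x)=\mathcal A_\alpha\alpha^{-1}x^{-\alpha}$ on $D$ (no principal value being needed there, as $\mathbf 1_D$ is constant near $x$).

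I would then apply the Ikeda--Watanabe formula \eqref{Ikeda-Wantanbe} with $A=(\overline{D})^c=(-\infty,0)$. On the left-hand side one has $\mathbb P^x\big(X_{\tau_D}\in(\overline{D})^c\big)=1$: since $\tau_D<\infty$ a.s.\ by \eqref{eq:lim_prob_tau_D}, $X_{\tau_D}$ is a bona fide $\mathbb R$-valued random variable; its law is absolutely continuous, hence has no atom at $0$, and $\mathbb P^x(X_{\tau_D}\in D)=\int_D P_D(x,y)\,dy=0$ because $P_D(x,\cdot)$ vanishes on $D$; so all the mass lies in $(\overline{D})^c$. On the right-hand side every integrand is nonnegative, so by Tonelli's theorem I may integrate in $z$ first; using the displayed identity and the definition \eqref{def:GreenF} of $G_D$,
\begin{align*}
1 &= \int_{(\overline{D})^c}\int_D G_D(x,y)\,\nu(z-y)\,dy\,dz = \frac{\mathcal A_\alpha}{\alpha}\int_D G_D(x,y)\,y^{-\alpha}\,dy\\
&= \frac{\mathcal A_\alpha}{\alpha}\int_0^\infty\int_D p_D(t,x,y)\,y^{-\alpha}\,dy\,dt,
\end{align*}
the last integral being finite by Corollary \ref{c:op_int_pD_y} applied with $\beta=\alpha/2\in(0,1)$.

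It remains to solve for the integral and simplify the constant: using $\mathcal A_\alpha=\alpha\Gamma(\alpha)\sin(\pi\alpha/2)/\pi$,
\[
\int_0^\infty\int_D p_D(t,x,y)\,y^{-\alpha}\,dy\,dt=\frac{\alpha}{\mathcal A_\alpha}=\frac{\pi}{\Gamma(\alpha)\sin(\pi\alpha/2)},
\]
which is the assertion; note the value is independent of $x$, as forced by the scaling \eqref{eq:sca}. Since every computation in the plan is essentially one line, the only substantive ingredient — and the one point where I would be careful — is the a.s.\ statement $X_{\tau_D}\in(\overline{D})^c$, i.e.\ that the symmetric $\alpha$-stable process leaves the half-line by a jump into $(-\infty,0)$ rather than by hitting $0$; I would invoke this via the absolute continuity of the law of $X_{\tau_D}$ together with $P_D(x,\cdot)\equiv 0$ on $D$, both recorded just after \eqref{Ikeda-Wantanbe}. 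Everything else is the elementary $\nu$-integral above and an application of Tonelli.
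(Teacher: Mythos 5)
Your proposal is correct and follows essentially the same route as the paper: the paper also applies the Ikeda--Watanabe formula, uses $\int_{D^c}\nu(z-y)\,dz=\tfrac{\mathcal A_\alpha}{\alpha}y^{-\alpha}$, and lets $t\to\infty$ with $\lim_{t\to\infty}\mathbb P^x(\tau_D<t)=1$ from \eqref{eq:lim_prob_tau_D}, which is the same normalization you obtain by taking $A=(\overline D)^c$ directly. Your extra care about the absence of an atom of $X_{\tau_D}$ at $0$ is a point the paper leaves implicit but does not change the argument.
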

	\begin{proof}
		From the Ikeda-Watanabe formula, we have
		\begin{align*}
			\mathbb{P}^x(\tau_D < t ) &= \mathcal{A}_{\alpha} \int_{0}^{ t } \int_D \int_{ D^c } p_D(t,x,y) |y-z|^{-1-\alpha} dzdyds \\ &= \frac{ \mathcal{A}_{\alpha} }{ \alpha } \int_{0}^{ t } \int_D p_D(t,x,y) y^{-\alpha} dyds.
		\end{align*}
		By \eqref{eq:lim_prob_tau_D}, $\lim_{t \rightarrow \infty} \mathbb{P}^{x}(\tau_D < t ) = 1$. Hence, letting $t\to\infty$, we get
		\begin{equation*}
			\int_{0}^{ \infty } \int_D p_D(t,x,y) y^{-\alpha} dyds =  \frac{ \pi }{\Gamma ( \alpha ) \sin(\pi \alpha/2) }.
		\end{equation*}
	\end{proof}
	Now, we will prove 
	\begin{proposal}\label{prop:5C} For $\alpha \in(0,1)\cup(1,2)$ and $\beta \in (0,1/2]$, we have
		\begin{equation}\label{eq:5C} 
			\int_{D} G_D(x,y) y^{-\beta-\alpha/2} dy =  \frac {C_{\alpha} \widehat{C}_{\alpha, \beta} }{  1 + C_{\alpha} \widetilde{C}_{\alpha, \beta} \overline{C}_{\alpha, \beta} }x^{\alpha/2- \beta}, \qquad x \in D,
		\end{equation}
		where $C_{\alpha}$ is the constant from (\ref{C_alpha}), $\widehat{C}_{\alpha, \beta}, \widetilde{C}_{\alpha, \beta}$ are given by  (\ref{const.1}), (\ref{const.2}) respectively, and $\overline{C}_{\alpha, \beta}$ is the constant from Lemma \ref{l:nu}. 
	\end{proposal}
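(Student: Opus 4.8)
The plan is to insert the compensated-potential representation \eqref{eq:GreenHunt} of $G_D$ into the left-hand side of \eqref{eq:5C} and to reduce everything to the Beta-type integrals collected in Section~\ref{sec:2}. By the scaling of $p_D$ (hence of $G_D$) it suffices to take $x=1$; write $J:=\int_D G_D(1,y)\,y^{-\beta-\alpha/2}\,dy$, so that the claim is $J=C_\alpha\widehat{C}_{\alpha,\beta}\big/\big(1+C_\alpha\widetilde{C}_{\alpha,\beta}\overline{C}_{\alpha,\beta}\big)$, the case of general $x$ then following from the factor $x^{\alpha/2-\beta}$. Substituting $G_D(1,y)=K_\alpha(1-y)-\E^1K_\alpha(X_{\tau_D}-y)$, with $K_\alpha(u)=C_\alpha|u|^{\alpha-1}$ since $\alpha\neq1$, I would handle the two resulting terms separately. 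The ``potential'' term contributes $C_\alpha\widehat{C}_{\alpha,\beta}$: after the (trivial) change of variable it is $C_\alpha\int_D|1-y|^{\alpha-1}y^{-\beta-\alpha/2}\,dy$, read off from Fact~\ref{f.2} when $\alpha<1$ and $\beta\in(\alpha/2,1/2]$ (no compensation), from Fact~\ref{f.1} when $\alpha<1$ and $\beta<\alpha/2$, or when $\alpha>1$ and $\beta<1-\alpha/2$ (compensation by $y^{\alpha-1}$), and from Fact~\ref{f.4} when $\alpha>1$ and $\beta>1-\alpha/2$ (compensation by $1$). The compensating functions $y^{\alpha-1}=K_\alpha(-y)/C_\alpha$ and $1$ are admissible because the same function, put inside $\E^1[\,\cdot\,]$, cancels in $G_D$: for the constant one uses $\E^1 1=\mathbb{P}^1(\tau_D<\infty)=1$ (see \eqref{eq:lim_prob_tau_D}), and for $K_\alpha(-y)$ one uses $\E^1K_\alpha(X_{\tau_D})=K_\alpha(1)$, which is exactly $G_D(1,0)=0$.

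For the ``harmonic'' term $\int_D\E^1K_\alpha(X_{\tau_D}-y)\,y^{-\beta-\alpha/2}\,dy$ I would pull $\E^1$ outside (Tonelli, applied to the genuinely nonnegative split pieces after inserting the same compensation), use $|X_{\tau_D}-y|=|X_{\tau_D}|+y$, substitute $y=|X_{\tau_D}|w$, and apply the matching identity for $\int_D(1+w)^{\alpha-1}w^{-\beta-\alpha/2}\,dw$ from Fact~\ref{f.2}, \ref{f.1} or~\ref{f.4}; this gives $C_\alpha\widetilde{C}_{\alpha,\beta}\,\E^1|X_{\tau_D}|^{\alpha/2-\beta}$. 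The crucial step is then
\[
\E^1|X_{\tau_D}|^{\alpha/2-\beta}=\overline{C}_{\alpha,\beta}\int_D G_D(1,y)\,y^{-\beta-\alpha/2}\,dy=\overline{C}_{\alpha,\beta}\,J,
\]
which follows from the Ikeda--Watanabe formula \eqref{Ikeda-Wantanbe}: indeed $\E^1|X_{\tau_D}|^{\alpha/2-\beta}=\int_0^\infty s^{\alpha/2-\beta}P_D(1,-s)\,ds=\int_0^\infty\!\!\int_D s^{\alpha/2-\beta}G_D(1,y)\,\nu(s+y)\,dy\,ds$, and by Tonelli and the substitution $s=yz$ the inner integral equals $y^{-\beta-\alpha/2}\int_D z^{\alpha/2-\beta}\nu(1+z)\,dz=y^{-\beta-\alpha/2}\overline{C}_{\alpha,\beta}$ by Lemma~\ref{l:nu}. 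Collecting terms yields $J=C_\alpha\widehat{C}_{\alpha,\beta}-C_\alpha\widetilde{C}_{\alpha,\beta}\overline{C}_{\alpha,\beta}\,J$, which is \eqref{eq:5C}.

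I expect the convergence bookkeeping to be the main obstacle, and the hardest regime to be $\alpha\in(1,2)$, $\beta\in(1-\alpha/2,1/2]$, i.e.\ $\beta+\alpha/2>1$: there $\int_D|1-y|^{\alpha-1}y^{-\beta-\alpha/2}\,dy$ diverges at both $0$ and $\infty$, and neither compensation by $y^{\alpha-1}$ (cures $\infty$ but not $0$) nor by $1$ (cures $0$ but not $\infty$) alone makes the split pieces of $G_D(1,\cdot)\,y^{-\beta-\alpha/2}$ absolutely integrable. My plan there is to split $D=(0,1)\cup(1,\infty)$ and compensate $|1-y|^{\alpha-1}$ by $1$ on $(0,1)$ and by $y^{\alpha-1}$ on $(1,\infty)$, with the companion compensation inside $\E^1[\,\cdot\,]$ being $|X_{\tau_D}|^{\alpha-1}$ on $(0,1)$ and $y^{\alpha-1}$ on $(1,\infty)$; these agree in expectation with the previous ones thanks to $\E^1|X_{\tau_D}|^{\alpha-1}=1$ (again the identity $K_\alpha(1)=\E^1K_\alpha(X_{\tau_D})$, i.e.\ $G_D(1,0)=0$). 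The four convergent pieces are then evaluated using Lemma~\ref{l:int_as_beta_plus_c} and the computations inside the proofs of Facts~\ref{f.1} and~\ref{f.4}, and the spurious poles $\pm(\alpha/2-\beta)^{-1}$, $\pm(\beta+\alpha/2-1)^{-1}$ appearing in them cancel between the potential and harmonic terms, leaving exactly $\widehat{C}_{\alpha,\beta}$ and $\widetilde{C}_{\alpha,\beta}$. The value $\beta=\alpha/2$ (only for $\alpha<1$) is not literally covered, since $\widehat{C}_{\alpha,\beta}$ and $\widetilde{C}_{\alpha,\beta}$ blow up there; it is supplied separately by Lemma~\ref{l:beta=alpha/2}. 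Likewise $\beta=1-\alpha/2$ (only for $\alpha>1$) is obtained by continuity, letting $\beta\to1-\alpha/2$ in the identity just proved.
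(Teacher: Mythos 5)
Your strategy is the one the paper uses: reduce to $x=1$ by scaling, insert the compensated representation \eqref{eq:GreenHunt} of $G_D$, evaluate the ``potential'' term by the Beta integrals of Facts \ref{f.2}--\ref{f.4}, and close the ``harmonic'' term into a linear equation for $J$ via the Ikeda--Watanabe formula and Lemma \ref{l:nu}; in particular your identity $\E^1|X_{\tau_D}|^{\alpha/2-\beta}=\overline{C}_{\alpha,\beta}\,J$ is exactly the paper's computation, only phrased through the Poisson kernel. For $\beta\in(0,1-\alpha/2)\setminus\{\alpha/2\}$ your argument coincides with the paper's proof. The genuine divergence is in the regime $\alpha\in(1,2)$, $\beta\in(1-\alpha/2,1/2]$: the paper does \emph{not} split the domain there, but instead swaps the two arguments using scaling and symmetry of $G_D$, writing $G_D(x,y)=x^{\alpha-1}C_\alpha\big(|1-y/x|^{\alpha-1}-\E^{y/x}|X_{\tau_D}-1|^{\alpha-1}\big)$ so that the harmonic term has a varying starting point and a fixed pole at $1$, and then compensates both brackets by the constant $1$ and invokes Fact \ref{f.4}. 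Your hybrid compensation ($1$ on $(0,1)$, $y^{\alpha-1}$ on $(1,\infty)$) is a genuinely different device, and the diagnosis behind it is correct and worth stressing: for $\beta<\alpha/2$, which holds throughout this regime, the integral $\int_D(|1-w|^{\alpha-1}-1)w^{-\beta-\alpha/2}\,dw$ appearing in Fact \ref{f.4} is not absolutely convergent at infinity, so the separation into $\widehat{C}_{\alpha,\beta}$ minus a harmonic piece requires exactly the kind of regrouping or truncation you propose; your route makes this bookkeeping honest, at the price of a split point $1/|X_{\tau_D}|$ appearing inside the expectation after the substitution $y=|X_{\tau_D}|w$. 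Two caveats. First, the hard case in your write-up is a plan rather than a proof: the four pieces, the $|X_{\tau_D}|$-dependent split of the $w$-integral, and the cancellation of the poles $(\alpha/2-\beta)^{-1}$ and $(\beta+\alpha/2-1)^{-1}$ against $\E^1[1]=1$ and $\E^1|X_{\tau_D}|^{\alpha-1}=1$ still have to be carried out explicitly. Second, your justification for the admissibility of the compensator $y^{\alpha-1}$ is superfluous: it does not depend on $X_{\tau_D}$, so it cancels pointwise in $G_D$ without any appeal to $G_D(1,0)=0$; that identity is what you genuinely need later, to evaluate the cross-terms created by the hybrid split.
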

	\begin{proof}
		Let  $\beta \in (0, 1-\alpha/2) \setminus \{\alpha/2\}$. For $z\in \R$ and $y \in D$, we let 
		\begin{align*}
			U(z,y) = 
			\begin{cases}
				C_\alpha |y-z|^{\alpha-1} & \mbox{\;if \;} \alpha<1, \beta \in (\alpha/2,1-\alpha/2), \\
				C_\alpha(|y-z|^{\alpha-1} - y^{\alpha-1}) & \mbox{\;if \;} \alpha<1, \beta \in (0,\alpha/2) \mbox{\;or\;} \alpha>1, \beta \in (0,1-\alpha/2).
			\end{cases}
		\end{align*}
		We will use Facts \ref{f.2} and \ref{f.1} depending on the range of $\alpha$ and $\beta$. By \eqref{eq:GreenHunt} and  \eqref{eq1:f.2} or \eqref{eq1:f.1}, we have  
		\begin{align*}
			\int_{D} G_D(x,y) y^{-\beta-\alpha/2} dy = C_{\alpha} \Big( \widehat{C}_{\alpha,\beta}x^{\alpha/2 - \beta} - \int_D \mathbb{E}^{x}[ U(X_{\tau_D},y)] y^{-\beta - \alpha/2} dy \Big).
		\end{align*}	
		Next, by Ikeda-Watanabe formula, (\ref{eq:beta_def_2}) and \eqref{eq2:f.2} or \eqref{eq2:f.1},
		\begin{align*}
			\int_D \mathbb{E}^{x} [U(X_{\tau_D}, y)] y^{-\beta - \alpha/2} dy 
			&= \int_{D^c} \int_D G_D(x,w)  \nu(w-z) \bigg[ \int_D U(z,y) y^{-\beta - \alpha/2} dy \bigg] dwdz \\
			& = \int_{D} \int_D G_D(x,w)  \nu(w+z) \bigg[ \int_D U(-z,y) y^{-\beta - \alpha/2} dy \bigg] dwdz \\			
			&= \widetilde{C}_{\alpha, \beta} \int_{D} \int_D G_D(x,w) \nu(w+z) z^{-\beta  + \alpha/2} dwdz,
		\end{align*}
		where in the last equality we substituted	$y = z \xi$. 
		By Lemma \ref{l:nu}, 
		\begin{align*}
			\int_{D} \nu(w+z) z^{-\beta  + \alpha/2} dz =  w^{-\beta - \alpha/2} \int_D \nu(1+y)y^{-\beta + \alpha/2} dy  = \overline{C}_{\alpha, \beta}  w^{-\beta - \alpha/2}.
		\end{align*}
		Hence, we obtain
		\begin{align*}
			\int_{D} G_D(x,y) y^{-\beta-\alpha/2} dy = C_{\alpha} \widehat{C}_{\alpha,\beta}x^{\alpha/2 - \beta} - C_{\alpha}\widetilde{C}_{\alpha, \beta} \overline{C}_{\alpha, \beta} \int_D G_D(x,w) w^{-\beta  - \alpha/2} dw,
		\end{align*}
		which gives \eqref{eq:5C}.
		
Next, consider $\alpha>1$ and $\beta \in (1-\alpha/2, 1/2]$. Here,  due to the lack of convergence of cerain integrals  we need to be more subtle. By scaling and symmetry of the Green function, we have 
		\begin{align*}
			G_D(x,y) &=   x^{\alpha - 1} C_{\alpha} \big( |1-y/x|^{\alpha - 1} - \mathbb{E}^{y/x} |X_{\tau_D} - 1|^{\alpha - 1} \big)\\
			&=   x^{\alpha - 1} C_{\alpha} \Big( \big(|1-y/x|^{\alpha - 1} -1 \big) - \big(\mathbb{E}^{y/x} |X_{\tau_D} - 1|^{\alpha - 1}-1 \big) \Big).
		\end{align*}
		Hence, by Fact \ref{f.4},
		\begin{align*}
			\int_D G_D(x,y) y^{-\beta - \alpha/2} dy &=	C_{\alpha} x^{\alpha- 1} \int_{D} \Big( \big( |1-y/x|^{\alpha - 1} - 1 \big) - \big( \mathbb{E}^{y/x} |X_{\tau_D} - 1|^{\alpha-1} - 1 \big) \Big)y^{-\beta - \alpha/2} dy \\
			& = C_{\alpha} x^{\alpha/2 - \beta } \int_{D}  \Big( \big( |1-w|^{\alpha - 1} - 1 \big) - \big( \mathbb{E}^{w} |X_{\tau_D} - 1|^{\alpha-1} - 1 \big) \Big) w^{-\beta - \alpha/2} dw \\
			&= C_{\alpha} x^{\alpha/2 - \beta} \Big(\widehat{C}_{\alpha,\beta} - \int_{D} \big( \mathbb{E}^{w} |X_{\tau_D} - 1|^{\alpha - 1} - 1 \big) w^{-\beta - \alpha/2} dw \Big) =  \mathcal{S} x^{\alpha/2 - \beta},
		\end{align*}
		where $\mathcal{S} = C_{\alpha} \Big(\widehat{C}_{\alpha,\beta} - \int_{D} \big( \mathbb{E}^{w} |X_{\tau_D} - 1|^{\alpha-1} - 1 \big) w^{-\beta - \alpha/2} dw \Big)$. Our aim is to calculate $\mathcal{S}$. 
		By Ikeda-Watanabe formula, Fubini's theorem, symmetry of the Green function, Fact \ref{f.4} and  Lemma \ref{l:nu}, we get 
		\begin{align*}
			&\int_{D}(\mathbb{E}^{w} |X_{\tau_D} - 1|^{\alpha - 1} - 1 )w^{-\beta - \alpha/2} dw \\
			&= \int_{D^c} \int_{D} \bigg[ \int_{D} G_D(w,y) w^{-\beta - \alpha/2}  dw\bigg]\nu(y-z)\big[ |1-z|^{\alpha-1} - 1 \big]dydz \\
			&= \int_{D^c} \int_{D} \mathcal{S} y^{\alpha/2-\beta} \nu(y-z)\big[ |1-z|^{\alpha-1} - 1 \big]dydz \\
			&= \mathcal{S} \mathcal{A}_{\alpha} \int_{D} \int_{D} y^{\alpha/2 - \beta} |y+w|^{-1 - \alpha} \big[ (1+w)^{\alpha-1} - 1 \big]dydw \\
			&= \mathcal{S}\mathcal{A}_{\alpha} \int_{D} \int_{D} y^{\alpha/2 - \beta} (1+y)^{-1-\alpha} \big[(1+w)^{\alpha -1} - 1 \big] w^{-\beta - \alpha/2} dy dw \\
			&= \mathcal{S} \mathcal{A}_{\alpha} \widetilde{C}_{\alpha, \beta} B(1-\beta + \alpha/2, \alpha/2 + \beta) = \mathcal{S} \widetilde{C}_{\alpha, \beta} \overline{C}_{\alpha,\beta}.
		\end{align*}
		Hence, $\mathcal{S} = C_{\alpha} \widehat{C}_{\alpha,\beta} - \mathcal{S} C_\alpha\widetilde{C}_{\alpha, \beta} \overline{C}_{\alpha,\beta}$,  which ends  the proof.
	\end{proof}
	
	We are ready to prove Theorem \ref{thm:1}.
	\begin{proof}[Proof of Theorem \ref{thm:1}]
		First note that for $\beta = \alpha/2$ the assertion follows by Lemma \ref{l:beta=alpha/2}.		Below we will use the identities
		\begin{align}
			\Gamma(1-x) \Gamma(x) &= \frac{\pi}{\sin(\pi x)}, \label{gamma_id.1} \\
			\Gamma(1-x)\Gamma(1+x) &= \frac{ \pi x}{\sin(\pi x)}. \label{gamma_id.2}
		\end{align}
		We will separately consider the cases $\alpha\not=1$ and $\alpha=1$.\\[4pt]
		{\bf  Case $\pmb{\alpha \neq 1}$.} By Proposition \ref{prop:5C} and Lemma \ref{l:int_pD_symmetry} we only need to prove that for $\beta \in (0,1/2] \setminus \{\alpha/2\}$,
		\begin{equation*}
			\frac {C_{\alpha} \widehat{C}_{\alpha, \beta} }{  1 + C_{\alpha} 	\widetilde{C}_{\alpha, \beta} \overline{C}_{\alpha, \beta} } = \frac{\pi }{\Gamma \big(\beta + \alpha/2 \big)\Gamma \big(  1 - \beta + \alpha/2 \big)\sin(\pi \beta) }.
		\end{equation*}
		Note that
		\begin{align*}
			C_{\alpha} \widehat{C}_{\alpha, \beta} = \frac{ 1 }{ 2\Gamma(\alpha)\cos(\pi\alpha/2) } \bigg( \frac{ \Gamma(\alpha)\Gamma(1-\beta -\alpha/2) }{ \Gamma(1-\beta + \alpha/2) } + \frac{\Gamma(\alpha) \Gamma(\beta - \alpha/2)}{\Gamma(\beta + \alpha/2)} \bigg).
		\end{align*}
		Now, applying (\ref{gamma_id.1}) for $x = \beta \pm \alpha/2$ we get
		\begin{align*}
			C_{\alpha} \widehat{C}_{\alpha, \beta} = \frac{ \Gamma(\beta - \alpha/2) }{ \Gamma(\beta + \alpha/2) }  \frac{ \sin(\pi\beta) }{ \sin(\pi(\beta + \alpha/2)) }.
		\end{align*}
		By (\ref{gamma_id.1}) and (\ref{gamma_id.2}), we have
		\begin{align*}
			C_{\alpha} \widetilde{C}_{\alpha, \beta} \overline{C}_{\alpha, \beta} &= 
			\frac{1}{ 2\Gamma(\alpha)\cos(\pi\alpha/2) } B(1-\beta-\alpha/2,\beta-\alpha/2)\frac{\alpha \Gamma(\alpha)\sin(\pi\alpha/2)}{\pi}B(1-\beta+\alpha/2,\beta+\alpha/2) \\
			&=\frac{\tan(\pi\alpha/2)}{2}\frac{\Gamma(1-\beta-\alpha/2)\Gamma(\beta+\alpha/2)}{\pi}\frac{\Gamma(1-\beta+\alpha/2)\Gamma(\beta-\alpha/2)}{\pi}\frac{\alpha\pi}{\Gamma(1-\alpha)\Gamma(1+\alpha)}\\ 
			&=\frac{\tan(\pi \alpha/2) \sin(\pi \alpha) }{ 2\sin(\pi(\beta + \alpha/2)) \sin(\pi(\beta - \alpha/2)) } = \frac{1 - \cos(\pi \alpha) }{ \cos(\pi\alpha) - \cos(2\pi\beta) } .
		\end{align*}
		Therefore,
		\begin{align*}
			1 + C_{\alpha} \widetilde{C}_{\alpha, \beta} \overline{C}_{\alpha, \beta} =  \frac{1 - \cos(2\pi \beta) }{ \cos(\pi\alpha) - \cos(2\pi\beta) } = \frac{ \sin^2(\pi\beta) }{ \sin(\pi(\beta + \alpha/2)) \sin(\pi(\beta - \alpha/2)) }.
		\end{align*}
		Hence, from the above transformations 
		\begin{equation*}
			\frac {C_{\alpha} \widehat{C}_{\alpha, \beta} }{  1 + C_{\alpha} \widetilde{C}_{\alpha, \beta} \overline{C}_{\alpha, \beta} } =  \frac{\pi }{\Gamma \big(\beta + \alpha/2 \big)\Gamma \big(  1 - \beta + \alpha/2 \big)\sin(\pi \beta) }.
		\end{equation*}
		
		\noindent {\bf  Case $\pmb{\alpha = 1}$.} We only need to consider $\beta \in (0, 1/2)$.
		By \eqref{eq:GreenHunt}, we get
		\begin{equation*}
			G_D(x,y) =  \frac{ 1 }{ \pi } \mathbb{E}^{x} \big[ \ln(|X_{\tau_D}-y|/y) \big] - \frac{1}{\pi}\ln(|x-y|/y).
		\end{equation*}
		We integrate both sides with respect to $y^{-\beta - 1/2} dy$. By Fact \ref{f.5}, we obtain 
		\begin{equation*}
			\frac{1}{\pi}\int_{D} \ln(|x-y|/y) y^{-\beta - 1/2} dy = \frac{x^{1/2 - \beta}}{\pi}\int_{D} \ln(|1-y|/y) y^{-\beta - 1/2} dy = \frac{ \sin(\pi \beta ) }{ (1/2 - \beta)\cos( \pi \beta ) } x^{1/2 - \beta}.
		\end{equation*}
		By Ikeda-Watanabe formula and Fact \ref{f.5},
		\begin{align*}
			&\int_{D} \mathbb{E}^{x} \big[\ln \big| X_{\tau_D} - y \big|/y \big]y^{-\beta - 1/2}dy = \int_{D}  \int_{D^c} G_D(x,w)\nu(w-z) \bigg[ \int_D \ln(|y-z|/y)y^{-\beta - 1/2} dy \bigg]dzdw \\&= \int_{D}  \int_{D} G_D(x,w)\nu(w+z)z^{-\beta + 1/2} \bigg[ \int_D \ln(1+u)u^{(\beta - 1/2) - 1} du \bigg]dzdw \\&= \frac{\pi}{ (1/2 - \beta) \cos(\pi \beta) } \int_{D}  \int_{D} G_D(x,w)\nu(w+z)z^{-\beta + 1/2}dzdw \\&= \frac{\pi}{ (1/2 - \beta) \cos(\pi \beta) } \overline{C}_{1, \beta} \int_D G_D(x,w)w^{-\beta - 1/2} dw.
		\end{align*}
		Hence, we get 
		\begin{align*}
			\int_{D} G_D(x,y) y^{-\beta-1/2} dy &= \frac{ \sin(\pi \beta ) x^{1/2 -\beta} }{ (1/2 - \beta)\cos( \pi \beta ) } \bigg( -1 + \frac{  B(3/2-\beta, 1/2 + \beta) }{ \pi(1/2 - \beta) \cos(\pi \beta) } \bigg)^{-1} \\ &= \frac{ \sin(\pi \beta ) x^{1/2 -\beta} }{ (1/2 - \beta)\cos( \pi \beta ) } \bigg( -1 + \frac{  1 }{ \cos^2(\pi \beta) } \bigg)^{-1} \\
			&= \frac{ \sin(\pi \beta )x^{1/2 -\beta} }{ (1/2 - \beta)\cos( \pi \beta ) } \frac{ \cos^2(\pi \beta) }{ \sin^2(\pi \beta) } = \frac{\pi x^{1/2 -\beta} }{\Gamma \big(\beta + 1/2 \big)\Gamma \big(  1 - \beta + 1/2 \big)\sin(\pi \beta) }.
		\end{align*}
	\end{proof}
	
	\subsection{Proof of Theorem \ref{tw:qbeta}}
	Recall that 
	\begin{align*}
		f(t) &= 	\mathcal{C}^{-1} t^{ (-\alpha/2 - \beta + \gamma)/\alpha } \mathbbm{1}_{(0,\infty)}(t),\\
		h_{\beta}(x) &= \int_{0}^{\infty} \int_{D} p_D(t,x,y)f(t)y^{-\gamma} dydt,\qquad x \in D,\\
		q_{\beta}(x) &= \frac{1}{h_{\beta}(x)} \int_{0}^{\infty} \int_{D} p_D(t,x,y)f'(t)y^{-\gamma} dydt,\qquad x \in D.
	\end{align*}
	where the constant $\mathcal{C}$ is given by \eqref{eq:constC}. First, we prove that: 
	\begin{lemma} \label{l:hbeta}
		For $\alpha \in (0,2),~\beta \in (0,1)$ we have
		\begin{equation*}
			h_{\beta}(x) = x^{\alpha/2 - \beta},\qquad x \in D.
		\end{equation*}
	\end{lemma}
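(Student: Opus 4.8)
The plan is to observe that the case $x=1$ is built into the normalisation and then propagate to general $x\in D$ by scaling. Writing $a=(\gamma-\alpha/2-\beta)/\alpha$, so that $f(t)=\mathcal{C}^{-1}t^{a}\mathbbm{1}_{(0,\infty)}(t)$, and setting
\[
	F(x)=\int_{0}^{\infty}\int_{D}p_D(t,x,y)\,t^{a}\,y^{-\gamma}\,dy\,dt,\qquad x\in D,
\]
we have $h_\beta(x)=\mathcal{C}^{-1}F(x)$ and, by \eqref{eq:constC}, $\mathcal{C}=F(1)$. Hence it suffices to show $F(x)=x^{\alpha/2-\beta}F(1)$, and this is a scaling argument based on \eqref{eq:sca}.

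First I would record that $F(x)\in(0,\infty)$ for every $x\in D$, which also guarantees that $\mathcal{C}$, $f$ and $h_\beta$ are well defined. Since $\gamma\in(\beta+\alpha/2,\,1+\alpha/2)\subset(-\alpha,\,1+\alpha/2)$, Lemma \ref{l:op_int_pD_y} applies: the inner integral is $\approx 1$ for $t<2^{-\alpha}$, and $\int_{0}^{2^{-\alpha}}t^{a}\,dt<\infty$ since $a>-1$ (equivalently $\gamma>\beta-\alpha/2$, which holds); it is $\approx t^{-1/2-\gamma/\alpha}$ for $t>2$, and $\int_{2}^{\infty}t^{\,a-1/2-\gamma/\alpha}\,dt=\int_{2}^{\infty}t^{-1-\beta/\alpha}\,dt<\infty$ since $\beta>0$; on the compact interval $[2^{-\alpha},2]$ the inner integral is bounded, by \eqref{eq:oppt} and $\gamma<1+\alpha/2$, exactly as in the proof of Corollary \ref{c:op_int_pD_y}. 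Finiteness and positivity of $F$ follow.

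Next, in $F(x)$ I would change variables $t=x^{\alpha}s$, $y=xv$ (for fixed $x>0$ this is a bijection of $(0,\infty)^{2}$ onto itself), with $dt=x^{\alpha}\,ds$ and $dy=x\,dv$. From \eqref{eq:sca} one derives the homogeneity relation $p_D(\lambda^{\alpha}t,\lambda x,\lambda y)=\lambda^{-1}p_D(t,x,y)$ for $\lambda>0$, and taking $\lambda=x$ gives $p_D(x^{\alpha}s,x,xv)=x^{-1}p_D(s,1,v)$. Collecting the powers of $x$,
\[
	F(x)=\int_{0}^{\infty}\int_{D}x^{-1}p_D(s,1,v)\,(x^{\alpha}s)^{a}(xv)^{-\gamma}\,x\,dv\,x^{\alpha}\,ds=x^{\,\alpha a-\gamma+\alpha}\,F(1).
\]
Since $\alpha a-\gamma+\alpha=(\gamma-\alpha/2-\beta)-\gamma+\alpha=\alpha/2-\beta$, we obtain $F(x)=x^{\alpha/2-\beta}F(1)=x^{\alpha/2-\beta}\mathcal{C}$, hence $h_\beta(x)=\mathcal{C}^{-1}F(x)=x^{\alpha/2-\beta}$, as claimed.

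The computation itself is just bookkeeping of exponents; the only genuinely substantive point is the finiteness of $F$, needed to give meaning to $h_\beta$ and to $\mathcal{C}^{-1}$, and this is precisely where the constraints $\beta\in(0,1)$ and $\gamma\in(\beta+\alpha/2,1+\alpha/2)$ enter. As a by-product, the identity $h_\beta(x)=x^{\alpha/2-\beta}$ shows that $h_\beta$ does not depend on the auxiliary exponent $\gamma$.
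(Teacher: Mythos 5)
Your proof is correct and follows essentially the same route as the paper: reduce to $x=1$ via the scaling relation $p_D(\lambda^\alpha t,\lambda x,\lambda y)=\lambda^{-1}p_D(t,x,y)$ derived from \eqref{eq:sca}, track the powers of $x$, and observe that the normalisation $\mathcal{C}=F(1)$ makes the value at $1$ equal to $1$; the finiteness check via Lemma \ref{l:op_int_pD_y} matches the paper's appeal to that lemma (and is in fact spelled out in more detail than the published proof).
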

	\begin{proof}
		We use similar argument as in Lemma \ref{l:int_pD_symmetry}. By Lemma \ref{l:op_int_pD_y} the integral 
		\begin{equation*}
			\int_{0}^{\infty} \int_{D} p_D(t,x,y)f(t)y^{-\gamma} dydt
		\end{equation*}
		is finite. 
		By (\ref{eq:sca}) and \eqref{eq:constC}
		\begin{align*}
			&\int_{0}^{\infty} \int_{D} p_D(t,x,y)f(t)y^{-\gamma} dydt = \int_{0}^{\infty} \int_{D}  x^{-1} p_D(tx^{-\alpha}, 1, y/x) f(t)y^{-\gamma} dydt \\&= \int_{0}^{\infty} \int_{D} x^{\alpha - 1}p_D(s,1,y/x) x^{-\alpha/2 - \beta + \gamma} f(s) y^{-\gamma} dyds \\&= \int_{0}^{\infty} \int_{D} x^{\alpha/2 - 1 - \beta + \gamma }p_D(s,1,z) f(s) x^{-\gamma+1} z^{-\gamma} dzds = x^{\alpha/2 - \beta}.
		\end{align*}
	\end{proof}

	\begin{proof}[Proof of Theorem \ref{tw:qbeta}]
		By Lemma \ref{l:op_int_pD_y} the integral 
		\begin{equation*}
			\int_{0}^{\infty} \int_{D} p_D(t,x,y)f'(t)y^{-\gamma} dydt
		\end{equation*}
		is finite. By (\ref{eq:sca}) and using similar argument as in Lemma \ref{l:hbeta} we get
		\begin{align}\label{eq:q1}
		h_\beta(x)	q_\beta(x) = 	x^{-\alpha/2-\beta} \int_{0}^{\infty} \int_{D} p_D(s,1,z) f'(s) z^{-\gamma} dzds = x^{-\alpha/2-\beta} q_\beta(1).
		\end{align}
		It remains to show that $q_\beta(1) = \kappa_{\beta}$. By Lemma \ref{l:hbeta}, \eqref{eq:q1} and Fubini theorem, we have 
		\begin{align*}
			1 =  h_{\beta}(1) &= \int_{0}^{\infty} \int_{D} p_D(t,1,y)f(t)y^{-\gamma} dydt 
			= \int_{0}^{\infty} \int_{D} \int_{0}^{t} p_D(t,1,y)f'(s)y^{-\gamma} dsdydt \\
			&=  \int_{0}^{\infty} \int_{D} \int_{0}^{\infty} p_D(t+s,1,y)f'(s)y^{-\gamma} dtdyds \\ 
			&=  \int_{0}^{\infty} \int_{D} \int_{0}^{\infty} \int_{D} p_D(t,1,w) p_D(s,w,y) f'(s)y^{-\gamma} dydsdwdt \\
			&=  \int_{0}^{\infty} \int_{D}  p_D(t,1,w) h_\beta(w) q_\beta(w) dwdt \\
			&= q_{\beta}(1) \int_{0}^{\infty} \int_{D} p_D(t,1,w)w^{-\alpha/2 - \beta} dwdt.
		\end{align*}
		Hence, by Theorem \ref{thm:1} we get $q_{\beta}(1) = \kappa_{\beta}$.	
	\end{proof}

	By Lemma \ref{l:int_pD_symmetry}, $\kappa_{\beta} = \kappa_{1-\beta}$. Now, we will complete this chapter by showing that $\kappa_\beta$ is increasing for $\beta \in(0,1/2)$ and decreasing when $\beta \in(1/2,1)$ and reaches its maximum at $\beta = 1/2$ (see Figure \ref{fig:k}). We will follow the idea of the proof from \cite{MR3460023}. We have 
	\begin{equation*}
		\frac{\Gamma'(x)}{\Gamma(x)} = -\gamma - \sum_{k=0}^{\infty} \Big( \frac{1}{x+k} - \frac{1}{k + 1} \Big),
	\end{equation*}
	where $\gamma$ is the Euler-Mascheroni constant. Taking derivative of $\log(\kappa_\beta)$ we get
	\begin{align*}
		\frac{\kappa'_{ \beta }}{\kappa_{ \beta }} &= - \sum_{k=0}^{\infty} \bigg( \frac{1}{\beta + \alpha/2 + k} - \frac{1}{ 1- \beta + \alpha/2 + k } - \frac{1}{ \beta + k } + \frac{1}{ 1 - \beta + k } \bigg) \\ &= \sum_{k=0}^{\infty} \bigg( \frac{ 2\beta - 1 }{ (\beta + \alpha/2 + k)(1- \beta + \alpha/2 + k) } - \frac{ 2\beta - 1 }{ (\beta + k) (1 - \beta + k) } \bigg) \\&= \sum_{k=0}^{\infty} \frac{ (1-2\beta) \alpha/2(2k + 1 + \alpha/2) }{ (\beta + \alpha/2 + k)(1- \beta + \alpha/2 + k)(\beta + k) (1 - \beta + k) }.
	\end{align*}
	Hence $\kappa'_{ \beta } > 0$ for $\beta \in (0, 1/2)$,  $\kappa'_{ \beta } = 0$ when $\beta = 1/2$ and $\kappa'_{ \beta } < 0$ when $\beta \in (1/2,1)$. 
	\begin{figure}[htp] 
		\centering
				\includegraphics[scale=0.4]{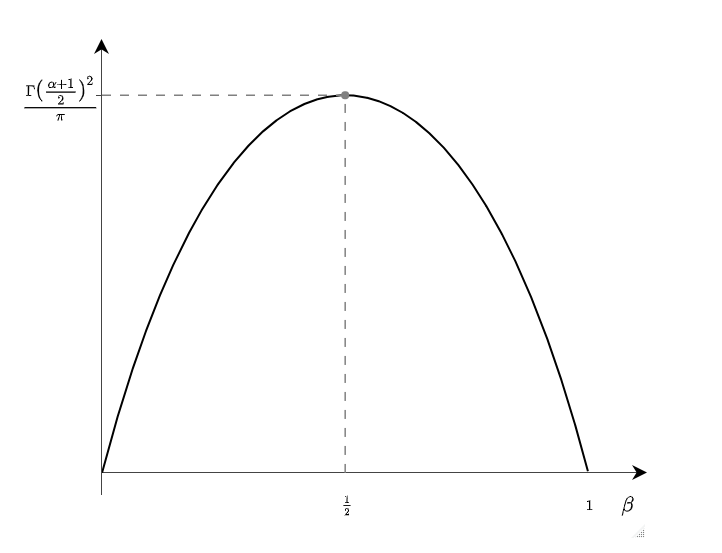}
		\caption{The function $\beta \rightarrow \kappa_{ \beta }$.} 
		\label{fig:k}
	\end{figure}
	
	\begin{remark}\label{rem:1} \rm
		One may try to prove Theorem \ref{tw:qbeta} by following more directly the idea from \cite{MR3933622} and calculate the integral
		\begin{align*}
			\int_0^\infty \int_D t^{\beta} p_D(t,x,y) dy dt  = \int_0^\infty t^{\beta} \P^x(\tau_D >t) dt = \frac{1}{\beta+1}\E^x( \tau_D^{\beta+1}).
		\end{align*}
		We may proceed as follows. We denote $S_t = \sup_{0\le s \le t} X_s$, then $\P^x(\tau_D >t) = \P^0(S_t <x)$. Following \cite{MR2789582}, we denote by
		$\mathcal{M}(z) = \E^0(S_1^{z-1})$
		the Mellin transform of $S_1$. By \cite[Theorem 8]{MR2789582}, $\mathcal{M}$ may be expressed by a complicated formula involving some special functions (double Gamma functions). Now, using scaling properties and after some calculations we get
		$\mathcal{M}(z) = \E^1(\tau_D^{(1-z)/\alpha})$. Hence,  
		\begin{align*}
			\int_0^\infty \int_D t^{\beta} p_D(t,1,y) dy dt  = \mathcal{M}(1-\alpha(\beta+1)).
		\end{align*}
		It seems that by using \cite[Theorem 7]{MR2789582} it is possible to obtain the assertion of Theorem \ref{thm:1} at least for some range of the parameters $\alpha$ and $\beta$. Like in our approach, however one has to struggle with some integrability issues.  
	\end{remark}
			
	\begin{remark}\label{rem:2} \rm
		Since $p_D(t,x,y) < p(t,x,y)$ and in particular $\lim_{x\to 0} p_D(t,x,y)/p(t,x,y) =0$, it seems natural that $p_D$ may be perturbed by the bigger potential than $p$. We verify below that indeed $\kappa_\beta > \kappa_\beta^\R$, where $\kappa_\beta^\R$ is given by \eqref{eq:kappabRd} with $d=1$.
		\newline
		\noindent Let $0 < \alpha < 1$ and $0 < \beta \leq \frac{1-\alpha}{2}$. The condition $\kappa_{\beta} < \kappa_{\beta}^{\mathbb{R}}$ is equivalent to
		\begin{equation*}
			\frac{ \Gamma \big( \beta \big) \Gamma \big( 1 - \beta \big) \Gamma \big( \frac{1 - \beta}{2} \big) }{ \Gamma \big( \frac{ \beta }{ 2 } \big) } < \frac{ \Gamma \big( \beta + \frac{\alpha}{2} \big) \Gamma \big( 1 - \beta + \frac{\alpha}{2} \big) \Gamma \big( \frac{1 - \beta}{2} - \frac{\alpha}{2} \big) }{ 2^{\alpha} \Gamma \big( \frac{ \beta }{ 2 } + \frac{\alpha}{2} \big) },
		\end{equation*}
		hence it suffice to show that for fixed $\beta \in (0, \frac{1-\alpha}{2})$, $u(t) = \frac{ \Gamma \big( \beta + t \big) \Gamma \big( 1 - \beta + t \big) \Gamma \big( \frac{1 - \beta}{2} - t \big) }{ 2^{\alpha} \Gamma \big( \frac{ \beta }{ 2 } + t \big) } $ is increasing on the interval $[0, \frac{1}{2} - \beta)$. We consider 
		\begin{equation*}
			\big[ \log(u(t)) \big]' =  \sum_{k=0}^{ \infty } \Big( - \frac{ 1 }{ \beta + t + k } - \frac{ 1 }{ 1- \beta + t + k } + \frac{ 1 }{ \frac{ 1- \beta }{ 2 } - t + k} + \frac{ 1 }{ \frac{\beta}{2} + t + k } \Big) - 2\ln(2).
		\end{equation*}
		Note that 
		\begin{equation*}
			\odv{  }{ t } \Big( - \frac{ 1 }{ \beta + t + k } + \frac{ 1 }{ \frac{\beta}{2} + t + k } \Big) < 0 \quad~\text{and}~\quad \odv{}{t} \Big( - \frac{ 1 }{ 1- \beta + t + k } + \frac{ 1 }{ \frac{ 1-\beta }{ 2 } - t + k } \Big) > 0,
		\end{equation*}
		therefore both functions above are strictly monotone. Hence, taking $t = \frac{1}{2} - \beta$ and $t = 0$ respectively, we get
		\begin{equation*}
			\big[ \ln(u(t)) \big]' \geq  \sum_{k=0}^{ \infty } \Big( \frac{2}{ \frac{ 1-\beta }{2} + k } - \frac{1}{ 1- \beta + k } - \frac{1}{ \frac{1}{2} + k } \Big) - 2\ln(2).
		\end{equation*}
		Now we investigate monotonicity with respect to variable $\beta$:
		\begin{equation*}
			\odv{ }{ \beta } \Big( \frac{ 2 }{ \frac{ 1 - \beta }{ 2 } + k } -  \frac{ 1 }{ 1 - \beta + k } - \frac{ 1 }{ \frac{ 1 }{ 2 } + k } \Big) = \frac{ 1 }{ ( \frac{ 1 - \beta }{ 2 } + k )^2 } - \frac{ 1 }{ (1 - \beta + k)^2 } > 0,
		\end{equation*}
		so it attains its minimum at $\beta = 0$. Hence, (see \cite[Equation 8.363.3]{MR2360010})
		\begin{equation*}
			[\ln (u(t))]' \geq  \sum_{k=0}^{\infty} \Big( \frac{ 1 }{ \frac{1}{2} + k } - \frac{ 1 }{ 1 + k } \Big) - 2\ln(2) = 0.
		\end{equation*}
		On Figure \ref{fig:2} we compare $\kappa_{\beta}$ and $\kappa_{\beta}^{\mathbb{R}}$ for $\alpha = \frac{1}{2}.$
		\begin{figure}[htp] 
			\centering
						\includegraphics[scale=0.4]{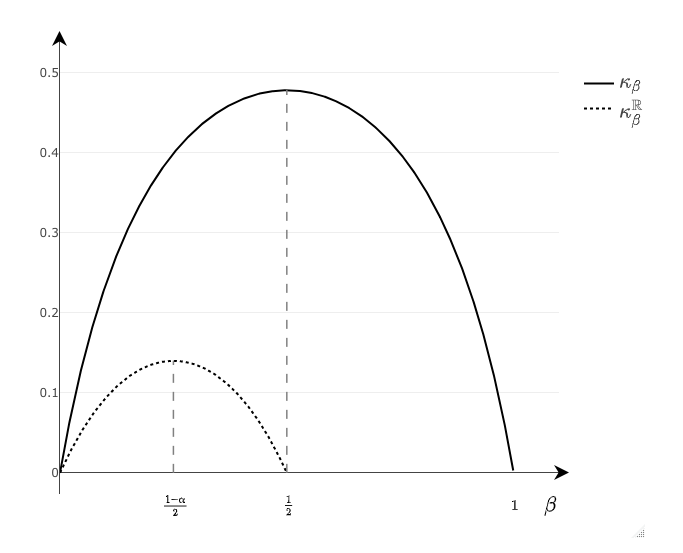}
			\caption{Comparison of $\kappa_{ \beta }$ and $\kappa_{ \beta }^{ \mathbb{R} }$ for $\alpha = \frac{1}{2} $.} 
			\label{fig:2}
		\end{figure}
	\end{remark}
	
	\section{Hardy identity} \label{sec:4}
		
	In this section we prove Hardy identity for transition density $p_D(t,x,y)$. We will use the results from \cite{MR3460023}. Recall that by $P_t^D$ we denote the semigroup generated by the process $X_t$ killed on exiting $D$. We first define the quadratic form $\mathcal{E}$ of $\Delta_D^{\alpha/2}$: 
	\begin{equation*}
		\mathcal{E}_D(u,u) = \lim_{t \to 0+} =\frac{1}{t} (u-P_t^Du,u),\qquad u\in L^2(D).
	\end{equation*}
	Here, as usual $(u,v) = \int_D u(x)v(x)dx$ for $u,v \in L^2(D).$ 
	We let
	\begin{equation*}
		\mathcal{D}(\mathcal{E}_D)=\{u\in L^2(D):\mathcal{E}_D(u,u)<\infty\}.
	\end{equation*}
	Now, we will prove an auxiliary lemma:
	\begin{lemma} \label{lemat_nu_granica}
		Let $x,y \in D.$ We have
		\begin{equation*}
			\lim_{t \rightarrow 0} \frac{p_D(t,x,y)}{t} = \nu(x-y).
		\end{equation*} 
	\end{lemma}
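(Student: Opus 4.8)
The plan is to read off the limit directly from the Hunt formula \eqref{eq:Hunt}. Dividing it by $t$ gives
\begin{equation*}
	\frac{p_D(t,x,y)}{t} \;=\; \frac{p(t,x,y)}{t} \;-\; \frac1t\,\E^x\!\left[p(t-\tau_D,X_{\tau_D},y)\,\mathbf{1}_{\{\tau_D<t\}}\right],
\end{equation*}
and I would pass to the limit $t\to0^+$ in the two terms separately. If $x=y$ both sides of the asserted identity are $+\infty$ (the right-hand side since $\nu(0)=+\infty$, the left-hand side since $p_D(t,x,x)\approx t^{-1/\alpha}$ by \eqref{eq:oppt}), so one may assume $x\neq y$.

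For the first term I would use scaling to write $p(t,x,y)=p_t(y-x)=t^{-1/\alpha}p_1\big(t^{-1/\alpha}(y-x)\big)$, so that $p(t,x,y)/t=t^{-1-1/\alpha}p_1\big(t^{-1/\alpha}(y-x)\big)$. Since $x\neq y$, the argument $w=t^{-1/\alpha}(y-x)$ tends to infinity, and by the classical tail asymptotics of the stable density $p_1(w)=\pA_\alpha|w|^{-1-\alpha}\big(1+o(1)\big)$ as $|w|\to\infty$; substituting and simplifying the powers of $t$ yields $p(t,x,y)/t=\pA_\alpha|y-x|^{-1-\alpha}\big(1+o(1)\big)\to\nu(x-y)$. (This is also a standard fact about stable densities and could simply be cited.)

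It then remains to show that the boundary term is $o(1)$. Here I would use that $X_{\tau_D}\in D^c=(-\infty,0]$ whereas $y>0$, so $|y-X_{\tau_D}|\ge y$, together with $0<t-\tau_D\le t$ on $\{\tau_D<t\}$, and the upper bound $p_s(z)\le c\,s/|z|^{1+\alpha}$ contained in the two-sided heat-kernel estimate recalled in the introduction:
\begin{equation*}
	p(t-\tau_D,X_{\tau_D},y)=p_{t-\tau_D}\!\big(y-X_{\tau_D}\big)\le c\,\frac{t-\tau_D}{|y-X_{\tau_D}|^{1+\alpha}}\le \frac{c\,t}{y^{1+\alpha}}.
\end{equation*}
Hence
\begin{equation*}
	0\le \frac1t\,\E^x\!\left[p(t-\tau_D,X_{\tau_D},y)\,\mathbf{1}_{\{\tau_D<t\}}\right]\le \frac{c}{y^{1+\alpha}}\,\P^x(\tau_D<t),
\end{equation*}
and since $x$ lies in the open set $D$ and the paths are right-continuous with $X_0=x$, the exit time satisfies $\tau_D>0$ $\P^x$-a.s., so $\P^x(\tau_D<t)\downarrow\P^x(\tau_D=0)=0$ as $t\downarrow0$. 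Combining the two limits gives $\lim_{t\to0}p_D(t,x,y)/t=\nu(x-y)$.

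The argument is short and the steps are routine; the only points that need a little care are pinning down the exact constant $\pA_\alpha$ in the asymptotics $p_1(w)\sim\pA_\alpha|w|^{-1-\alpha}$ (the two-sided estimate $p_t(x)\approx t^{-1/\alpha}\wedge t|x|^{-1-\alpha}$ by itself does not determine the constant), and the convergence $\P^x(\tau_D<t)\to0$, which is precisely where it matters that $x$ is an interior point of $D$.
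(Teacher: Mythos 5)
Your proof is correct and follows essentially the same route as the paper: divide the Hunt formula by $t$, invoke the known limit $p(t,x,y)/t\to\nu(x-y)$ for the free kernel, and dominate the expectation term by $c\,y^{-1-\alpha}\,\P^x(\tau_D<t)$. The only (immaterial) difference is the final step, where the paper obtains the quantitative bound $\P^x(\tau_D<t)\le ct$ via the Ikeda--Watanabe formula and Lemma \ref{l:op_int_pD_y}, whereas you use the equally valid and more elementary observation that $\tau_D>0$ $\P^x$-a.s.\ because $x$ is an interior point of $D$ and the paths are right-continuous.
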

	\begin{proof}
		By Hunt's formula, 
		\begin{equation*}
			\frac{p_D(t,x,y)}{t} = \frac{p(t,x,y)}{t} - \frac{ \mathbb{E}^x[p(t-\tau_D, X_{\tau_{D}}, y)\mathbf{1}_{ \{\tau_D < t \} } ] }{t}.
		\end{equation*}
		Limit of the first term is known (see. e.g. \cite{MR3165234}):
		\begin{equation*}
			\lim_{t \rightarrow 0} \frac{p(t,x,y)}{t} = \nu(x-y).
		\end{equation*}
		Hence it suffices to show that, the second term converges to zero. We have
		\begin{align*}
			\frac{ \mathbb{E}^x[p(t-\tau_D, X_{\tau_{D}}, y)\mathbf{1}_{ \{\tau_D < t \} } ] }{t} \leq c  \mathbb{E}^x \bigg[ \frac{ t - \tau_{D} }{ |X_{\tau_{D}} - y|^{1+\alpha} }\mathbf{1}_{ \{\tau_D < t \} } \bigg] t^{-1} \leq \frac{c}{y^{1+\alpha}} \mathbb{P}^{x} (\tau_D < t).
		\end{align*}
		By Ikeda-Watanabe formula and scaling property
		\begin{align*}
			\mathbb{P}^{x} (\tau_D < t) &= \int_D \int_{D^c} \int_{0}^{t}  p_D(s,x,y) \nu(z-y)dsdydz \\ &=c \int_D \int_{0}^{t} p_D(s,x,y)y^{-\alpha}dsdy = c \int_D \int_{0}^{tx^{-\alpha}}  p_D(s,1,y)y^{-\alpha} dsdy.
		\end{align*}
		For $t$ such that $tx^{-\alpha} < 1$ and by Lemma \ref{l:op_int_pD_y} we get
		\begin{equation*}
			\int_D \int_{0}^{tx^{-\alpha}} p_D(s,1,y)y^{-\alpha} dsdy \leq ct \stackrel{t\to0}{\longrightarrow} 0.
		\end{equation*}
	\end{proof}
	
	\begin{proof}[Proof of Theorem \ref{thm:HardyId}]
		Since assumptions of \cite[Theorem 2]{MR3460023}  are satisfied, we have
		\begin{align*}
			\mathcal{E}_D(u,u) = &\kappa_{ \beta }\int_D \frac{u^2(x)}{x^{\alpha}} dx \\ &+\lim_{t \rightarrow 0} \int_D \int_D \bigg( \frac{u(x)}{x^{\alpha/2 - \beta}} - \frac{u(y)}{y^{\alpha/2 - \beta}} \bigg)^2 x^{\alpha/2 - \beta} y^{\alpha/2 - \beta} \frac{p_D(t,x,y)}{ 2t }dxdy.
		\end{align*}
		By Lemma  \ref{lemat_nu_granica} we get
		\begin{equation*}
			\lim_{t \rightarrow 0} \frac{p_D(t,x,y)}{ t }  = \nu(x-y),
		\end{equation*}
		so we have only to justify that we can change the order of the limit and the integral. Note that $\frac{p_D(t,x,y)}{ t } \leq c\nu(x-y)$. Hence if
		\begin{equation*}
			\int_D \int_D \bigg( \frac{u(x)}{x^{\alpha/2 - \beta}} - \frac{u(y)}{y^{\alpha/2 - \beta}} \bigg)^2 x^{\alpha/2 - \beta} y^{\alpha/2 - \beta}\nu(x-y)dxdy< \infty,
		\end{equation*}
		then we apply dominated convergence theorem. If
		\begin{equation*}
			\int_D \int_D \bigg( \frac{u(x)}{x^{\alpha/2 - \beta}} - \frac{u(y)}{y^{\alpha/2 - \beta}} \bigg)^2 x^{\alpha/2 - \beta} y^{\alpha/2 - \beta}\nu(x-y)dxdy = \infty,
		\end{equation*}
		then we apply Fatou's lemma.
	\end{proof}
	
	\noindent Since $\kappa_{ \beta }$ is increasing for $\beta \in (0,1/2)$ and decreasing when $\beta \in (1/2,1)$, it attains its maximum at $1/2$. Hence, we obtain the following result first obtained in \cite[(1.5)]{MR2663757}.
	
	\begin{proposal}[Hardy inequality]\label{Prop:HardyIn}
		For $u \in L^2(D)$ we have
		\begin{equation}\label{Eq:HardyIn}
			\mathcal{E}_D(u,u) \geq \frac{ \Gamma((\alpha + 1)/2)^2 }{\pi} \int_D \frac{u^2(x)}{ x^{\alpha} } dx.
		\end{equation}
	\end{proposal}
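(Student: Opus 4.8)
The plan is to obtain \eqref{Eq:HardyIn} as an immediate consequence of the Hardy identity in Theorem \ref{thm:HardyId}. First I would note that if $u\notin\mathcal{D}(\mathcal{E}_D)$ then the left-hand side is $+\infty$ and there is nothing to prove, so we may assume $u\in\mathcal{D}(\mathcal{E}_D)$. For every fixed $\beta\in(0,1)$ the double integral appearing in \eqref{eq:HardyId} is nonnegative: it is the integral of the square $\big(u(x)x^{\beta-\alpha/2}-u(y)y^{\beta-\alpha/2}\big)^2$ against the nonnegative weight $x^{\alpha/2-\beta}y^{\alpha/2-\beta}\,\nu(x-y)$. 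Discarding it in \eqref{eq:HardyId} yields
\[
\mathcal{E}_D(u,u)\ \ge\ \kappa_\beta \int_D \frac{u^2(x)}{x^\alpha}\,dx,
\]
and this holds simultaneously for all $\beta\in(0,1)$.

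Second, I would optimize the constant over $\beta$. By the computation of $\kappa'_\beta/\kappa_\beta$ carried out just after the proof of Theorem \ref{tw:qbeta}, the map $\beta\mapsto\kappa_\beta$ is strictly increasing on $(0,1/2)$ and strictly decreasing on $(1/2,1)$, hence attains its maximum at $\beta=1/2$. Choosing $\beta=1/2$ in the displayed inequality therefore gives the largest constant available from this family. It remains only to evaluate $\kappa_{1/2}$ from \eqref{eq:kappabeta}; using $\Gamma(1/2)=\sqrt{\pi}$ one gets
\[
\kappa_{1/2}=\frac{\Gamma(1/2+\alpha/2)\,\Gamma(1/2+\alpha/2)}{\Gamma(1/2)\,\Gamma(1/2)}=\frac{\Gamma((\alpha+1)/2)^2}{\pi},
\]
which is exactly the constant claimed in \eqref{Eq:HardyIn}.

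There is essentially no obstacle here: the whole analytic content is already carried by Theorem \ref{thm:HardyId}, and the only points to verify are the sign of the remainder term (immediate) and the location of the maximum of $\kappa_\beta$ (already established before the statement). If one preferred not to invoke the explicit monotonicity of $\kappa_\beta$, it would suffice to substitute $\beta=1/2$ directly into \eqref{eq:HardyId} from the outset; the sole computational point is then the Gamma-function simplification above, which is routine. I would also remark, as the text preceding the statement does, that $\kappa_{1/2}$ is the best constant, so \eqref{Eq:HardyIn} is sharp.
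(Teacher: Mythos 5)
Your proof is correct and follows exactly the route the paper takes: drop the nonnegative double integral in the Hardy identity \eqref{eq:HardyId}, use the monotonicity of $\beta\mapsto\kappa_\beta$ established after Theorem \ref{tw:qbeta} to select $\beta=1/2$, and evaluate $\kappa_{1/2}=\Gamma((\alpha+1)/2)^2/\pi$. The paper's own argument is just the sentence preceding the proposition, so your write-up is simply a slightly more explicit version of the same proof.
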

	It turns out that $\kappa_{1/2} = \frac{ \Gamma((\alpha + 1)/2)^2 }{\pi}$ is the best possible constant in the inequality above, see \cite{MR2663757} for details.
		
	\begin{remark}\label{rem:3} \rm
		We note that for $d = 1, \alpha < 1$ and $\beta  < 1 - \alpha$, \eqref{eq:HardyId} may be obtained directly from (\ref{eq:p2}). For $u$ with support in $D$ we put $\beta - \alpha/2$ for $\beta$ in (\ref{eq:p2}) and we get
		\begin{equation*}
			\mathcal{E}(u,u)= 
			(\kappa_{\beta-\alpha/2}^{\R} +  \overline{C}_{\alpha,\beta})
			\int_{D} \frac{u(x)^2}{|x|^\alpha}\,dx
			+ \frac{1}{2} \int_{D}\!\int_{D}
			\left[\frac{u(x)}{x^{\alpha/2-\beta}}-\frac{u(y)}{|y|^{\alpha/2-\beta}}\right]^2
			|x|^{\alpha/2-\beta}|y|^{\alpha/2-\beta} \nu(x,y)
			\,dy\,dx.
		\end{equation*}
	Since $\pE_D$ coincides with $\pE$ for the functions supported in $D$ we get \eqref{eq:HardyId} provided 
	\begin{equation*}
		\kappa^{\mathbb{R}}_{\beta - \alpha/2} + \overline{C}_{\alpha,\beta} = \kappa_{\beta}.
	\end{equation*}
	Indeed, the equality above holds for $\alpha \in (0,2)$ and $\beta \in (0,1)$. By (\ref{gamma_id.1}) and the formula
	\begin{equation*}
		\frac { \Gamma( x ) }{ \Gamma(1/2 - x) } = \frac{ 2^{1-2x} \cos(\pi x) \Gamma( 2x ) }{ \pi^{1/2} },
	\end{equation*}
	we get
	\begin{equation*}
		\kappa^{\mathbb{R}}_{\beta - \alpha/2} =  \frac{1}{\pi} \Gamma(1-\beta + \alpha/2) \Gamma(\beta + \alpha/2) 2 \cos(\pi(\beta/2 + \alpha/4)) \sin(\pi(\beta/2 - \alpha/4)).
	\end{equation*}
	Now, applying identity $\cos(b+a)\sin(b-a) = \frac{1}{2} \big( \sin(2b) - \sin(2a) \big)$,	we get
	\begin{equation*}
		\kappa^{\mathbb{R}}_{\beta - \alpha/2} + \overline{C}_{\alpha,\beta} = \Gamma(1-\beta + \alpha/2) \Gamma(\beta + \alpha/2) \frac{ \sin(\pi\beta) }{\pi} = \kappa_{\beta}.
	\end{equation*}
	
	\end{remark}
	\section*{Acknowledgment}
	We thank Krzysztof Bogdan for many helpful discussions and comments on the paper.
	
	\bibliographystyle{abbrv}
	\bibliography{shlbib}
	
\end{document}